\newcommand\MyBox[2]{
  \fbox{\lower0.75cm
    \vbox to 1.7cm{\vfil
      \hbox to 1.7cm{\hfil\parbox{1.4cm}{#1\\#2}\hfil}
      \vfil}%
  }%
}
\renewcommand{\geq}{\geqslant}
\renewcommand{\leq}{\leqslant}
\newcommand{\g}{\gamma}
\renewcommand{\b}{\beta}
\newcommand{\R}{\mathbb{R}}
\newcommand{\T}{\mathbb{T}}
\newcommand{\C}{\mathbb{C}}
\newcommand{\Dz}{\partial_z}
\newcommand{\calM}{\mathcal{M}}
\DeclareMathOperator{\csch}{csch}
\newtheorem{theorem}{Theorem}[section]
\newtheorem{lemma}[theorem]{Lemma}
\newtheorem{proposition}[theorem]{Proposition}
\newtheorem{remark}[theorem]{Remark}
\newtheorem{definition}[theorem]{Definition} 
\newtheorem{corollary}[theorem]{Corollary}
\newtheorem*{main-theorem}{Main Theorem}
\newtheorem*{remark*}{Remark}
\newtheorem{hypothesis}[theorem]{Hypothesis}
\numberwithin{equation}{section}
\title{Modulational Instability in the Ostrovsky Equation and Related Models}
\author[Bhavna]{Bhavna$^\ast$}
\author[Johnson]{Mathew A. Johnson$^\dagger$}
\author[Pandey]{Ashish~Kumar~Pandey$^\ast$}
\email{bhavnai@iiitd.ac.in, matjohn@ku.edu, ashish.pandey@iiitd.ac.in}
\address{$^\ast$Department of Mathematics, IIIT Delhi,  110020, India\\
$^\dagger$ Department of Mathematics, University of Kansas, 1460 Jayhawk Boulevard, Lawrence, KS 66049, USA}
\date{\today}
\begin{document}

\maketitle
\begin{abstract}

We study the modulational instability of small-amplitude periodic traveling wave solutions in a dispersion generalized Ostrovsky equation. Specifically, we investigate the invertibility of the 
associated linearized operator in the vicinity of the origin and derive a modulational instability index that depends on the dispersion and nonlinearity.  
For the classical Ostrovsky equation, we recover the well-known Lighthill condition for modulational instability of small-amplitude periodic traveling waves,
and further provide a rigorous connection of the Lighthill condition to the spectral instability of the underlying wave.  
Our results and methodologies further apply to a wide-class of Ostrovsky type
models that incorporate various dispersive effects.  As such, we present new results illuminating the effects of rotation on various full-dispersion models
arising in the study of weakly nonlinear surface water waves.
\end{abstract}

\section{Introduction}

In 1978,  Lev Ostrovsky \cite{Ostrovsky1978NonlinearOcean} proposed the following equation, which is now commonly referred to by his name,
\begin{equation}\label{E:Ost}
(u_t + \beta u_{xxx}+(u^2)_x)_x = \g u ,
\end{equation}
as a model for the unidirectional propagation of weakly nonlinear surface and internal waves of small amplitude in a rotating fluid in the long-wavelength regime.
 Here, $\beta$ is the dispersion coefficient, which determines the type of dispersion, and $\gamma$ is the coriolis parameter which measures the rotational effect of the earth\cite{Galkin1991OnFluid}. The Ostrovsky equation \eqref{E:Ost} also occurs as a model for the
propagation of short pulses in nonlinear media and as a model for magneto-acoustic solitary waves in a plasma\cite{Gilman1995ApproximateEquation}. Wave dynamics in relaxing medium \cite{Vakhnenko1999High-frequencyMedium} are described by this equation and its reduced form (the reduced Ostrovsky equation) taking $\beta=0$. The Gardner-Ostrovsky equation, which contains both quadratic and cubic nonlinearities, can be obtained by further generalising the Ostrovsky equation \cite{Holloway1999AZone}.  A brief account of the history and applications
of the Ostrovsky equation can be found in \cite{Stepanyants2019NonlinearWaves}.

There has also been interest in the so-called 
generalized Ostrovsky equation, given by
\begin{equation}\label{eq:gost2}
\left( u_t+\beta u_{xxx}+f(u)_x\right)_x-\gamma u=0,~x \in \R
\end{equation}
which generalizes the nonlinearity of the dispersive medium while 
satisfying a power-like scaling condition in the long wavelength regime: see, for example, \cite{Holloway1999AZone}. 
As with most nonlinear dispersive equations, 
the existence and dynamics of periodic traveling or solitary waves are of special interest in both \eqref{E:Ost} and \eqref{eq:gost2}. The existence of solitary waves and their stability in the Ostrovsky equation and its generalizations have been demonstrated in \cite{Lu2012OrbitalEquation, Liu2004StabilityEquation, Levandosky2007StabilityEquation,PS20}. In \cite{Hakkaev2017PeriodicStability, Geyer2017SpectralEquation,JP16}, periodic traveling waves of
the Ostrovsky equation with general nonlinearity \eqref{eq:gost2},
as well as the generalized reduced Ostrovsky equation  (corresponding to $\beta=0$ above), have been formed for small values of $\g$ and have been shown to be spectrally stable to periodic perturbations of the same period as the wave. 
In \cite{Grimshaw2008Long-timeEquation}, 
 it was shown through weakly nonlinear analysis that
quasi-monochromatic wave trains \cite{Stepanyants2019NonlinearWaves,Whitfield2014Rotation-inducedWaves} of the Ostrovsky equation are modulationally unstable in a relatively “short-wave” range $k_ch<kh\ll1$ and absent in the longer wave range $kh<k_ch$, where $k$ is the wave-number, $h$ is the basin depth and $k_c$ is the critical wave number. 
This weakly nonlinear analysis has been extended to more general models with cubic nonlinearity (the so-called Gardner–Ostrovsky equation \cite{Whitfield2015Wave-packetEquation} and the Shrira equation\cite{Nikitenkova2015ModulationalDispersions}). 
Finally, the authors' in \cite{Bhavna2022High-FrequencyEquation} have recently investigated that the periodic traveling waves of the Ostrovsky equation \eqref{E:Ost} suffers high-frequency instability if $k>\sqrt[4]{4\gamma/\beta}$ for positive values of $\beta$ and no high-frequency instability for when\footnote{This should be 
compared to Corollary \ref{c:1} below, where modulational instability is detected for $\beta<0$.} $\beta<0$.

In the absence of rotational effects, the models \eqref{E:Ost} reduces to the well-studied Korteweg-de Vries (KdV) equation
\[
u_t-\beta u_{xxx}+\left(u^2\right)_x=0,
\]
which is a canonical model for the unidirectional propagation of weakly nonlinear, small amplitude water waves in the long wavelength regime: see  \cite{Lannes2013TheAsymptotics}
and references therein.  It is well known-however, that while the KdV equation explains well long-wave phenomena in a channel of water --  traveling solitary and periodic waves, for example --
it fails to exhibit many high-frequency phenomena such as wave breaking -- the evolutionary formation of bounded solutions with infinite gradients -- and peaking -- the existence 
of bounded, steady solutions with a singular point, such as a peak or cusp.  This of course should not be surprising, as the phase velocity associated with the linear 
part of the KdV equation poorly approximates that of the water wave equations outside the long wavelength regime.  Indeed,  the (non-dimensional) phase speed for unidirectional
water waves can be shown to expand as
\[
c_{ww}(k):=\sqrt{\frac{\tanh(k)}{k}} =1-\frac{1}{6}k^2+\mathcal{O}(k^4),~~|k|\ll 1
\]
where here $k$ is the frequency of the wave.  Thus, in the long wavelength regime $|k|\ll 1$ the water wave phase speed $c_{ww}(k)$ agrees up to $\mathcal{O}(k^2)$
with the phase speed associated to the KdV equation.

In an effort to find a simple mathematical equation that could explain water wave phenomena outside of the long-wavelength regime, Whitham proposed the model
\begin{equation}\label{e:whitham}
u_t+\beta\mathcal{M}_{ww}u_x+\left(u^2\right)_x=0
\end{equation}
where here $\mathcal{M}_{ww}$ is a Fourier multiplier with symbol
\[
\widehat{\mathcal{M}_{ww}f}(k) = c_{ww}(k)\hat{f}(k).
\]
The model \eqref{e:whitham}, now referred to as the Whitham equation, balances both the full phase speed for unidirectional water waves with a canonical shallow water nonlinearity
and hence, Whitham conjectured,  one might expect it to be capable of predicting both breaking and peaking of water waves.  This has recently been seen to be the case.
Indeed, \eqref{e:whitham} has recently been seen to exhibit both wave breaking \cite{Hur_breaking} and peaking \cite{EK_global,EW_WhithamConj}.  Additionally, \eqref{e:whitham}
was shown in \cite{Hur2015ModulationalWaves} to bear out the famous Benjamin-Feir, or modulational, instability of small amplitude periodic traveling waves: see also the related
numerical work in \cite{Carter24,SKC_WhithamNumerics} on the stability of large amplitude periodic waves.  Taken together, it seems clear that, regardless of its rigorous connection to the full
water wave problem\footnote{The relevance of the Whitham equation as a model for water waves has recently been studied in \cite{MKD,BKN}, where it was found to perform better than the KdV 
and BBM equations in describing surface water waves in the intermediate and short wave regime.}, the dispersion generalized model \eqref{e:whitham} admits many interesting high-frequency features known to exist in the full water wave problem.

Given the success  of \eqref{e:whitham} in describing water wave phenomena outside of the long-wavelength regime, a number of recent
works have aimed to study such models that incorporate additional physical effects such as surface tension, constant vorticity, as well
as models allowing for bidirecitonal wave propagation: see, for example, \cite{HJ_SurfaceTension,CJ_num1,EJC,EJMR,JW,JTW,Carter_2018} and references therein.
Interestingly, these works that focus on stability may often be carried out by considering a \emph{dispersion generalized Whitham equation}
\begin{equation}\label{e:whitham2}
u_t+\beta\mathcal{M}u_x+\left(u^2\right)_x=0,
\end{equation}
where here the Fourier multiplier $\mathcal{M}$
\begin{equation}\label{e:M}
\widehat{\mathcal{M}f}(k)=m(k)\hat{f}(k)
\end{equation}
need only have a symbol $m(k)$ that satisfies a few simple non-degeneracy, smoothness and growth assumptions.  Results for specific models, or incorporating specific physical
properties, can then be immediately ascertained by substituting for $\mathcal{M}$ a specific Fourier multiplier whose symbol agrees with the associated phase velocity.
For example,  with the choice $m(k) = 1-|k|^\alpha (\alpha > 1)$  \eqref{e:whitham2} is the Fractional KdV (fKdV) equation, $m(k) = 1-|k|$ is the Benjamin-Ono (BO) equation, 
$m(k)=k\coth k$ is the Intermediate Long wave (ILW) equation and $m(k)=\sqrt{\tanh k/k}$ is the Whitham equation \eqref{e:whitham}.

Continuing in this spirit, this current work seeks to study the existence and stability of periodic traveling wave solutions in the following
dispersion-generalized  Ostrovsky (gOst) equation 
\begin{equation}\label{eq:gost}
\left( u_t+\beta\calM u_x+(u^2)_x\right)_x-\gamma u=0,~ \gamma>0, ~\beta \in \R\setminus\{0\},
\end{equation}
where here $\mathcal{M}$ is a Fourier multiplier operator as in \eqref{e:M}
which satisfies the following assumptions:
\begin{hypothesis}\label{h:m} The multiplier $m(k)$ in \eqref{e:M} is assumed to satisfy the following:
\begin{enumerate}
    \item[(H1)] $m$ is real valued, even and without loss of generality, $m(0)=1$;
    \item[(H2)] there exists constants $C_1,C_2>0$ and $\alpha\geq -1$ such that  
    \[
    C_1 k^\alpha \leq m(k) \leq C_2 k^\alpha
    \]
for $ k \gg 1$. 
\end{enumerate}
Additionally, throughout we will assume the frequencies $k>0$ considered satisfy the following:
\begin{enumerate}
\item[(H3)] for each fixed $n=2,3,\ldots$ we have 
\[
k^2 \left( m(kn)-m(k) \right) \neq\dfrac{\g (n^2-1)}{\b n^2}
\]
for all $k>0$.
\end{enumerate}
\end{hypothesis}

\begin{remark}
Hypotheses (H1)-(H3) above will be essential for the proof of the existence of small-amplitude periodic traveling waves of \eqref{eq:gost}: see
Section \ref{s:2} below.  In particular, we note that hypotheses (H3) rules out the resonance between the fundamental mode and a higher harmonic. 
for more details, see the discussion directly preceding the statement of Theorem \ref{T:sol}.  

Also, we note in the case $m(k)=1-k^2$, corresponding to the classical Ostrovsky equation, the non-resonance condition (H3) holds for all $k>0$
if $\beta<0$ while if $\beta>0$ it requires that 
\[
k\notin\left\{\left(\frac{\gamma}{\beta n^2}\right)^{1/4}:n\in\mathbb{N},~~n\geq 2\right\},
\]
thus excluding a countable number of frequencies tending to zero as $n\to\infty$.
\end{remark}

The model \eqref{eq:gost} can thus be thought of as an extension of \eqref{e:whitham2} that incorporates rotational effects. Notice \eqref{eq:gost} recovers the standard
Ostrovsky equation \eqref{E:Ost} by choosing $\mathcal{M}=-\partial_x^2$.  Further, one can take $m(k)=1+|k|^\alpha$, $m(k) = 1+|k|$,
$m(k)=k\coth(k)$ and $m(k) = \sqrt{\tanh(k)/k}$ to produce variants of the fKdV, BO, ILW, and Whitham equations that incorporates rotating background fluids.
Further, note that since $m(k)$ is even, if $u(x,t)$ satisfies \eqref{E:Ost} for a particular choice of $\beta$ and  $\gamma$, then $-u(x,-t)$ satisfies \eqref{E:Ost} 
with $\beta$ and  $\gamma$ replaced by $-\beta$ and $-\gamma$ respectively. Due to this symmetry, we restrict ourselves to the case of $\gamma > 0$.

\

In this article, we investigate the modulational instability of small-amplitude periodic traveling waves of the gOst equation \eqref{eq:gost}. Perturbing the stokes waves, that is, small-
amplitude, one-dimensional periodic traveling waves of permanent form and constant velocity, by functions bounded in space and exponential in time usually yields a spectral problem whose spectral elements characterize the time growth rates of the perturbation. This collection of spectral elements is referred as the stability spectrum of theses waves. But the scenario is not same here. In order to study the modulational instability, we need to study the spectrum for small values of Floquet exponent. The differential operator becomes singular for very small values of Floquet exponent and hence the results found by replacing the study of invertibility by the study of the spectrum will not be uniform for small values of Floquet exponent, that is, the set of values of amplitude for which these results are valid depends upon Floquet exponent, which should not be the case. Hence, we can not study modulational instability by converting the invertibility problem into the spectral problem and we
instead study the invertibility problem directly. 

To state our main result,  note that the dispersion relation associated to \eqref{eq:gost} is given by
\begin{align}\label{e:dispersion}
    \omega(k)=\b km(k)+\dfrac{\g}{k}
\end{align}
while the corresponding phase and group velocities are given by 
\begin{equation}\label{e:gvel}
    c_p(k) := \frac{\omega(k)}{k} = \b m(k)+\dfrac{\g}{k^2}\quad \text{and} \quad c_g(k):=\dfrac{d\omega}{dk}=\b (m(k)+k m^\prime (k))-\dfrac{\g}{k^2},
\end{equation}
respectively.   We now state our main result, providing a criterion governing the modulational instability of periodic traveling waves of the 
generalized Ostrovsky equation \eqref{eq:gost} which is as follows.

\begin{theorem}[Modulational instability index]\label{t:1} A $2\pi/k$-periodic traveling wave
of \eqref{eq:gost} with sufficiently small amplitude is modulationally unstable if
\begin{equation}\label{e:cd}
   \Delta(k):=(c_p(k)-c_p(2k)) \frac{dc_g(k)}{dk}<0
\end{equation}
where $c_p$ and $c_g$ are phase and group velocities respectively given in \eqref{e:gvel}. It is modulationally stable if $\Delta(k)>0$.  
\end{theorem}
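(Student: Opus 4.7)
I would proceed in three stages: (i) construct small-amplitude $2\pi/k$-periodic traveling waves by a Stokes expansion, (ii) Bloch-decompose the linearized problem and identify a two-dimensional ``bad'' subspace where $\L$ fails to be invertible at $(a,\xi)=(0,0)$, and (iii) Lyapunov--Schmidt reduce to a $2\times 2$ matrix equation whose determinant sign yields $\Delta(k)$. For (i), seeking $u(x,t) = \phi(x-ct)$ and integrating once reduces \eqref{eq:gost} to $\b\calM\phi' - c\phi' + (\phi^2)' - \g\Dx^{-1}\phi = 0$. A bifurcation argument from the trivial wave, justified by (H1)--(H3), produces a smooth family
\[
\phi(x;a) = a\cos(kx) + a^2\phi_2(x) + O(a^3), \qquad c(a) = c_p(k) + a^2 c_2 + O(a^4),
\]
where the $\cos(2kx)$-coefficient of $\phi_2$ is proportional to $(c_p(k)-c_p(2k))^{-1}$, nonzero by (H3), and the Fredholm condition at order $a^3$ pins down $c_2$ in terms of the same quantity.

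For (ii), linearizing in the frame moving with speed $c(a)$ and Bloch-decomposing $v = e^{i\xi x}w(x)$ with $w$ of period $2\pi/k$, the linearized operator takes the form
\[
\L_\xi(a) = (\Dx+i\xi)\bigl[\b\calM_\xi - c(a) + 2\phi(\cdot;a)\bigr] - \g(\Dx+i\xi)^{-1},
\]
where $\calM_\xi$ denotes the Bloch-shifted Fourier multiplier. A direct computation using $\calM e^{\pm ikx} = m(k)e^{\pm ikx}$ together with $c_p(k) = \b m(k) + \g/k^2$ gives $\ker\L_0(0) = \mathrm{span}\{\cos(kx),\sin(kx)\}$, while (H3) ensures $\L_0(0)$ is boundedly invertible on the $L^2$-orthogonal complement $Y$. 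The singular behavior of $(\Dx+i\xi)^{-1}$ on the zero Bloch--Fourier mode as $\xi\to 0$ is precisely the obstruction to a clean spectral analysis; this forces one to study invertibility directly, keeping careful track of the mean/zero-mode condition throughout the reduction.

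For (iii), a Lyapunov--Schmidt reduction solves the range equation on $Y$ uniquely for $w|_Y$ as a function of the ``bad'' component $w_0 \in \ker\L_0(0)$, reducing invertibility of $\L_\xi(a)$ to invertibility of an explicit $2\times 2$ matrix $B(a,\xi)$. Taylor expanding,
\[
\det B(a,\xi) = \xi^2\bigl[\alpha(k)\,a^2 + O(a^3) + O(\xi)\bigr]
\]
for some explicit $\alpha(k)$, so modulational instability (a pair of non-purely-imaginary eigenvalues of the associated spectral problem) occurs precisely when $\alpha(k) < 0$. The prefactor $\xi^2$ accounts for the two neutral modes of the unperturbed operator; differentiating the diagonal entries of $B$ twice in $\xi$ extracts the group-velocity derivative $c_g'(k)$, while substituting the explicit forms of $\phi_2$ and $c_2$ into the $a^2$-term produces the factor $c_p(k)-c_p(2k)$. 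Up to a positive normalizing constant this yields $\alpha(k) = \Delta(k)$, as required.

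\textbf{Main obstacle.} The chief difficulty is bookkeeping: the singular antiderivative $\g(\Dx+i\xi)^{-1}$ couples Fourier modes delicately as $\xi\to 0$, so one must carefully identify which $(a^j,\xi^\ell)$ cross-terms survive in the leading order of $\det B$. Verifying that the resulting expression factors cleanly as $(c_p(k)-c_p(2k))\,c_g'(k)$, rather than producing an opaque combination, is the critical calculation and constitutes the content of the theorem.
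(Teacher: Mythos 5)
Your architecture matches the paper's at every stage --- a Stokes expansion in which the second-harmonic coefficient is $A_2=\tfrac12(c_p(k)-c_p(2k))^{-1}$, a Floquet--Bloch reduction, a two-dimensional critical subspace spanned by $\cos z,\sin z$ at $(a,\xi)=(0,0)$, and a projection onto a $2\times 2$ matrix --- but the final reduction step as you have written it would fail. Your matrix $B(a,\xi)$ carries no spectral parameter: ``invertibility of $\mathcal{L}_\xi(a)$'' only detects whether $0$ is an eigenvalue, whereas modulational instability is about the two critical eigenvalues $\lambda=i\Omega_{\pm1,a,\xi}$ leaving the imaginary axis. Concretely, your claimed expansion $\det B(a,\xi)=\xi^2\bigl[\alpha(k)a^2+O(a^3)+O(\xi)\bigr]$ cannot hold for the compressed generator: at $a=0$ the two critical eigenvalues share the same linear part, $\Omega_{\pm1,\xi}=\xi\bigl(2\gamma-\beta k^3 m'(k)\bigr)+O(\xi^2)$ (note $2\gamma-\beta k^3m'(k)=k^2\bigl(c_p(k)-c_g(k)\bigr)$, generically nonzero), so $\det B$ contains the $a$-independent term $-\xi^2\bigl(2\gamma-\beta k^3m'(k)\bigr)^2$ at order $\xi^2$, which swamps the $\xi^2a^2$ term whose sign you want to read off; the sign of $\det B$ alone decides nothing. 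The correct criterion, with purely imaginary trace, is $\operatorname{tr}^2-4\det>0$, and the decisive structural fact is that the $O(\xi^2)$ contributions of $\operatorname{tr}^2$ and $4\det$ cancel \emph{identically}, leaving a discriminant of the form $\xi^4\bigl(2\gamma+\beta k^3(km''(k)+2m'(k))\bigr)^2+\xi^2a^2\,I(k)+\dots$ with $I(k)$ proportional to $k\,c_g'(k)/\bigl(c_p(k)-c_p(2k)\bigr)$; instability holds for $|\xi|\ll|a|$ precisely when $I(k)<0$, i.e.\ $\Delta(k)<0$. This is exactly how the paper proceeds: it retains $\lambda$ in the reduced matrix $\mathcal{B}^\lambda_{a,\xi}$, sets $\lambda=iX$, and studies the discriminant of the resulting real quadratic in $X$. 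Tracking only $\det B$ misses the cancellation that is the heart of the computation.

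A second, related issue: your operator $\mathcal{L}_\xi(a)$ contains $\gamma(\partial_x+i\xi)^{-1}$, which is exactly the singular-in-$\xi$ generator formulation the paper deliberately avoids; you correctly name the zero-mode obstruction, but your plan does not implement a fix. The paper's remedy is never to invert: it works with the quadratic pencil $\mathcal{T}^\lambda_{a,\xi}=\lambda(\partial_z+i\xi)+k^2(\partial_z+i\xi)^2\bigl(-c+\beta\mathcal{M}_k+2w\bigr)-\gamma$, which is regular in $\xi$, satisfies the uniform bound $\|\mathcal{T}^\lambda_{a,\xi}-\mathcal{T}^\lambda_{0,0}\|=O(|a|+|\xi|)$, and so admits standard spectral perturbation theory down to $\xi=0$; the basis of the critical subspace then comes for free from $\partial_z w$ and $\partial_a w$, and the $\xi$-expansion from the commutators $[\mathcal{T}^\lambda_{a,0},z]$ and $[[\mathcal{T}^\lambda_{a,0},z],z]$. (Also, nonvanishing of the $A_2$-denominator $3\gamma+4\beta k^2(m(k)-m(2k))$ for $\beta>0$ is not a consequence of (H3); the paper must exclude the resonant wavenumbers in Theorem~\ref{T:sol}(2).) The repair is clear: carry $\lambda$ through your Lyapunov--Schmidt reduction --- or multiply through by $(\partial_x+i\xi)$ to recover the pencil --- and extract the sign of the discriminant of the reduced quadratic rather than of a $\lambda$-free determinant.
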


\begin{remark}
Our precise definition of modulational stability and instability is provided in Definition \ref{D:MI} in Section \ref{s:3} below.
\end{remark}

\begin{remark}\label{R:Lighthill}
We note that the instability condition \eqref{e:cd} is precisely the same as the well-known Lighthill criteria for modulational instability of 
small-amplitude periodic traveling wave solutions in the context of the Ostrovsky equation.  Indeed, in \cite{Grimshaw2008Long-timeEquation,GSA16}  the authors
use formal asymptotic methods (so-called modulation theory) to show that if $u$ is a small amplitude, weakly nonlinear periodic 
solution of the Ostrovsky equation with an asymptotic expansion of the form
\[
u(x,t) = \left(A e^{i\theta}+c.c.\right) + \left(A_2 e^{2i\theta}+c.c.\right)+\ldots
\]
where c.c. denotes the complex conjugate of the preceding term, $\theta=kx-\widetilde{\omega}(|A|,k) t$, where $\widetilde{\omega}(|A|,k)$ is the nonlinaer dispersion relation, 
and $|A|=\varepsilon\ll 1$ is slowly varying (in space and time) 
with $A_2=\mathcal{O}(\varepsilon^2)$, then the leading order term $A$ satisfies the effective nonlinear Schr\"odinger (NLS) equation
\begin{equation}\label{e:NLS}
iA_t+\frac{1}{2}\omega''(k) A_{XX}-\omega_2(k)|A|^2A=0,
\end{equation}
where here $X=x-c_g(k)t$, $\omega(k)$ is the dispersion relation \eqref{e:dispersion} for the Ostrovsky equation and where $\omega_2$
is identified as the $\mathcal{O}(\varepsilon^2)$ correction to the dispersion relation 
\[
\widetilde{\omega}(|A|,k) = \omega(k)+\varepsilon^2\omega_2+\mathcal{O}(\varepsilon^4)
\]
in the amplitude of the wave.  The Lighthill criteria then says that the small weakly nonlinear periodic traveling wave solution is modulationally unstable
provided that the NLS equation \eqref{e:NLS} is focusing, i.e. provided that
\begin{equation}\label{e:Lighthill}
\omega''(k)\omega_2(k)<0.
\end{equation}
In Remark \ref{R:Lighthill2} below, we will show that the Lighthill condition \eqref{e:Lighthill} agrees precisely with the rigorous modulational instability index
\eqref{e:cd} stated in Theorem \ref{t:1}.  In this way, in the case of the Ostrovsky equation, our work can be seen as a rigorous justification of the (formal) Lighthill
condition \eqref{e:Lighthill} at the level of the (rigorous) spectral stability of the underlying small-amplitude periodic traveling wave.  Of course,
our analysis also applies to the wider class of dispersion generalized Ostrovsky equations given in \eqref{eq:gost2}.
\end{remark}

From Theorem~\ref{t:1}, $\Delta(k)$ can change sign using two mechanishms - first when $c_p(k)=c_p(2k)$, that is, phase velocities of first and second harmonic coincide, and second when $\frac{dc_g(k)}{d k}=0$, that is, group velocity has a critical value. As an immediate corollary of Theorem~\ref{t:1}, we obtain modulational instability in the Ostrovsky equation \eqref{E:Ost}.

\begin{corollary}[Modulational instability]\label{c:1}
For a fixed $\g>0$, a $2\pi/k$-periodic traveling wave of the Ostrovsky equation \eqref{E:Ost} with sufficiently small amplitude is modulationally unstable if $k>k_c$ where
   \begin{equation*}
\begin{cases}
k_c = \left(\dfrac{\g}{3|\b|}\right)^{1/4} & \quad \text{ if } \b >0, \\
k_c = \left(\dfrac{\g}{4|\b|}\right)^{1/4} & \quad \text{ if } \b <0
\end{cases}
\end{equation*}
and it is modulationally stable otherwise.
\end{corollary}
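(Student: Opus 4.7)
The proof plan for Corollary~\ref{c:1} is essentially a direct specialization of Theorem~\ref{t:1}. First I would identify the Ostrovsky equation \eqref{E:Ost} with the member of the generalized family \eqref{eq:gost} corresponding to $\mathcal{M}=-\partial_x^2$, i.e.\ the Fourier symbol $m(k)=k^2$. Plugging this choice into the expressions \eqref{e:gvel} produces explicit rational-in-$k$ formulas for $c_p$ and $c_g$, each of the form (monomial in $k^2$) plus $\gamma$ over a monomial in $k^2$.

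Next I would compute the two ingredients of the index $\Delta(k)$ defined in \eqref{e:cd}. A short calculation shows that, after pulling out a common positive factor $1/k^{2}$, the difference $c_p(k)-c_p(2k)$ becomes a constant multiple of $(\gamma\pm 4\beta k^{4})$; similarly $dc_g/dk$, after pulling out $1/k^{3}$, is a constant multiple of $(\gamma\pm 3\beta k^{4})$ (the relative signs being fixed by $m(k)=k^{2}$ and the sign convention identifying \eqref{eq:gost} with \eqref{E:Ost}). Multiplying the two contributions yields
\begin{equation*}
  \Delta(k)\;=\;\frac{C}{k^{5}}\,\bigl(\gamma+4\beta k^{4}\bigr)\bigl(\gamma-3\beta k^{4}\bigr)
\end{equation*}
with $C>0$. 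Thus the sign of $\Delta(k)$ is governed by just two potential zero crossings: one at $k^{4}=\gamma/(4|\beta|)$, coming from the phase-velocity resonance $c_p(k)=c_p(2k)$, and one at $k^{4}=\gamma/(3|\beta|)$, coming from the group-velocity critical point $c_g'(k)=0$, in agreement with the two mechanisms highlighted immediately after Theorem~\ref{t:1}.

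I would then split into the two cases. When $\beta>0$, the first factor is strictly positive on $(0,\infty)$, so $\Delta$ changes sign only at the group-velocity critical point, giving the threshold $k_c=\bigl(\gamma/(3|\beta|)\bigr)^{1/4}$. When $\beta<0$, it is the second factor that is strictly positive on $(0,\infty)$, so $\Delta$ changes sign only at the resonance point, giving $k_c=\bigl(\gamma/(4|\beta|)\bigr)^{1/4}$. In both cases one verifies by inspecting the signs of the factors that $\Delta(k)<0$ for $k>k_c$ and $\Delta(k)>0$ for $0<k<k_c$, so Theorem~\ref{t:1} immediately yields the claimed instability/stability dichotomy. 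There is no genuine obstacle here---the corollary is pure bookkeeping on top of Theorem~\ref{t:1}; the only care required is to keep the sign convention relating the $\beta$ in \eqref{E:Ost} to the $\beta$ in \eqref{eq:gost} consistent, so that the two mechanisms are attached to the correct sign of $\beta$.
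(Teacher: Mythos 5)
Your proposal is correct and is essentially the paper's own proof from Section~\ref{ss:1}: specialize Theorem~\ref{t:1} to the Ostrovsky dispersion, compute $c_p(k)-c_p(2k)=\tfrac{3}{4k^2}\left(\gamma+4\beta k^4\right)$ and $\tfrac{dc_g}{dk}=\tfrac{2}{k^3}\left(\gamma-3\beta k^4\right)$, and observe that exactly one factor changes sign for each sign of $\beta$, with the same attribution of mechanisms (group-velocity extremum for $\beta>0$, first/second-harmonic phase resonance for $\beta<0$). The only cosmetic difference is that the paper takes $\mathcal{M}=1+\partial_x^2$, i.e.\ $m(k)=1-k^2$, so that the normalization $m(0)=1$ in (H1) holds literally, whereas your choice $m(k)=k^2$ with the compensating sign flip of $\beta$ is equivalent here because $\Delta(k)$ is unchanged by adding a constant to the symbol $m$.
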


\begin{figure}[t]
\begin{center}
(a) \includegraphics[scale=0.4]{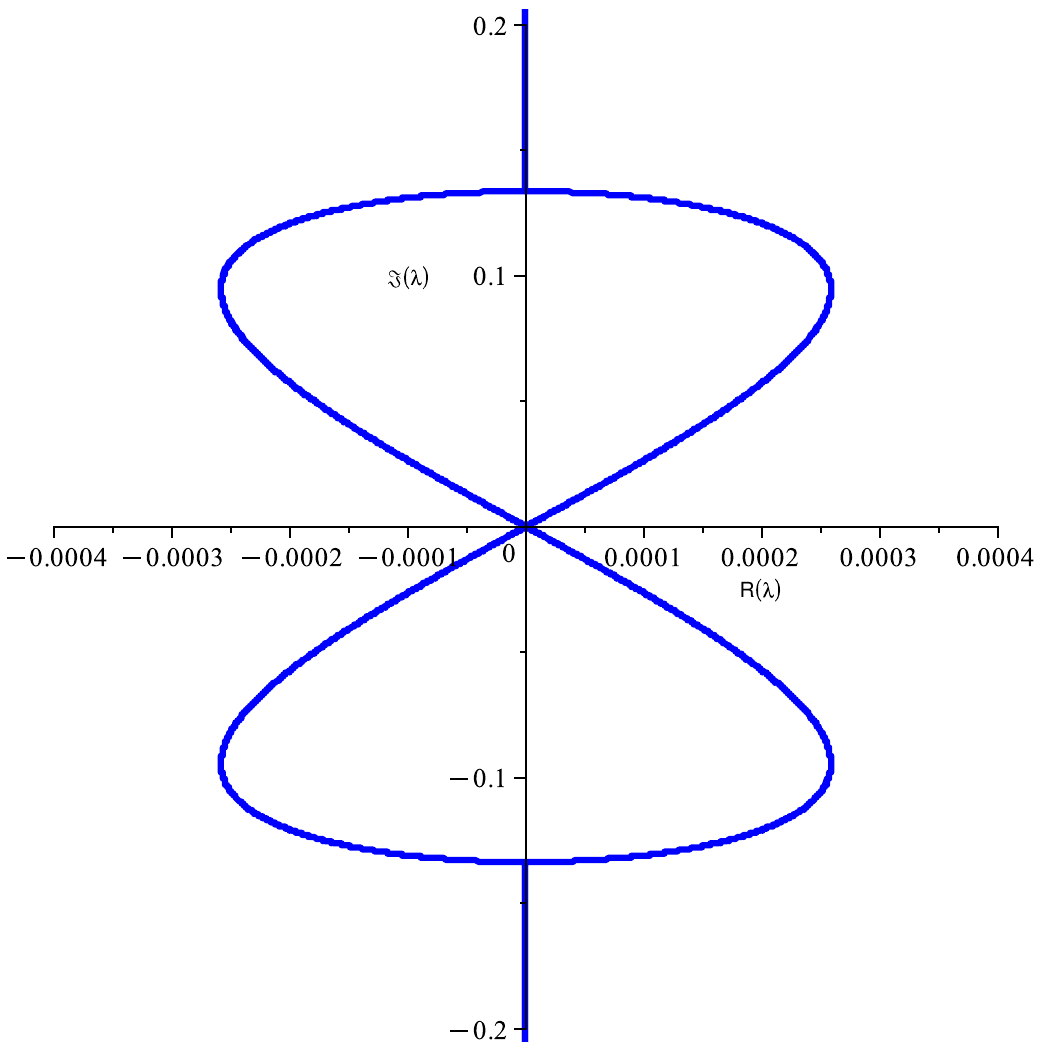}\qquad (b) \includegraphics[scale=0.4]{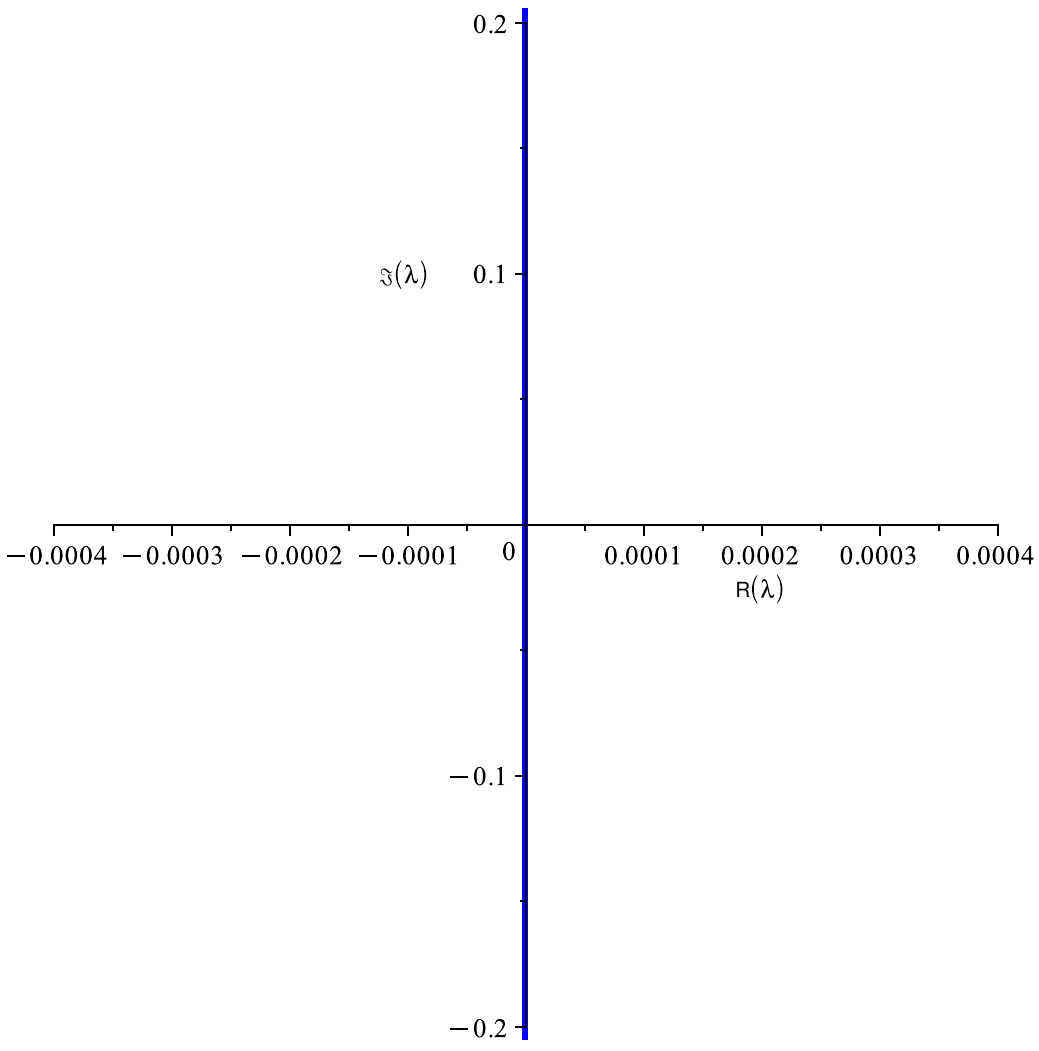}
\caption{A numerical determination of the $\lambda\in\mathbb{C}$ with $|\lambda|\ll 1$ such that the linear operator $\mathcal{T}_{k,a}^\lambda$ fails
to be invertible with bounded inverse on $L^2(\mathbb{R})$ for the classical Ostrovsky equation (corresponding to $m(k)=1-k^2$).  
In both figures, we took $\beta=1$ and $\gamma=1$, for which Corollary \ref{c:1} gives $k_c=0.76$.  Taking an amplitude $a=0.05$, in 
(a) we took $k=0.8>k_c$ Theorem \ref{t:1}.  Similarly, again taking $a=0.05$, in (b) we took $k=0.72<k_c$ and so the spectral stability in a 
neighbhrood of the origin is again consistent with Theorem \ref{t:1}.
}\label{F:Ost_Spec}
\end{center}
\end{figure}

For $\b=1$ and $\g=1$, the value of $k_c$ in Theorem~\ref{t:1} is approximately $0.76$ which agrees with critical wavenumber obtained by Whitfield and Johnson in \cite{Whitfield2014Rotation-inducedWaves}.  See also Figure \ref{F:Ost_Spec}  where modulational instability and stability is demonstrated numerically 
for cases $k>k_c$ and $k<k_c$, respectively.
In addition, Whitfield and Johnson mention that the instability is caused as group velocity has a critical value at $k_c$ which in accordance with our analysis for $\beta>0$. 
In our analysis, we will also see  when $\beta<0$ that modulational instability is caused by coinciding phase velocities of first and second harmonic at $k_c$.

\begin{remark}\label{r:singular_limit1}
From Corollary \ref{c:1} we see that, regardless of the sign of $\beta$, the critical frequency $k_c$ satisfies
\[
k_c\sim \gamma^{1/4},~~{\rm for}~~0<\gamma\ll 1.
\]
Thus, one may think this suggests that all periodic traveling wave solutions of the KdV equation (corresponding essentially to $\gamma=0$) exhibit a modulational instability.
This is, in fact not the case, as it is actually known that all periodic traveling waves in the KdV equation are modulationally stable (in fact, spectrally stable to general localized
and bounded perturbations) \cite{HK08,Bronski2010TheEquation,BJK_11,Bottman2009KdVStable}.  This emphasizes the singular nature of the $\gamma\to0^+$ limit, and results
about the associated $\gamma=0$ model can not be directly inferred from taking $\gamma\to 0^+$ in our analysis.
\end{remark}

\begin{remark}\label{r:reduced_limit1}
Also from Corollary \ref{c:1} we see that the critical frequency $k_c$ satisfies
\[
k_c\sim|\beta|^{-1/4},~~{\rm as}~~\beta\to 0.
\]
This suggests that periodic traveling wave solutions in the \emph{reduced} Ostrovsky equation, corresponding to \eqref{E:Ost} with $\beta=0$, are necessarily
modulationally stable.  This prediction is consistent with the work \cite{JP16}, where the authors establish that $T$-periodic traveling wave solutions
of the reduced Ostrovsky equation are nonlinearlly orbitally stable to $NT$-periodic perturbations for every $N\in\mathbb{N}$, i.e. they are orbitally stable
to subharmonic perturbations.  
\end{remark}

This article is organized as follows.
In Section~\ref{s:2}, the existence of a family of periodic traveling waves is shown using a standard argument based on implicit function theorem and Lyapunov-Schmidt reduction. Moreover, we obtain a small-amplitude expansion of these periodic traveling waves. We linearize \eqref{eq:gost} about the obtained periodic traveling wave and examine the invertibility of the linearized operator resulting in a modulational instability index in Section~\ref{s:3}. Using modulational instability index calculated in Section~\ref{s:3}, we continue by studying 
modulational instability for various
relevant choices of the dispersive operator $\mathcal{M}$ in \eqref{eq:gost}.  Specifically, we apply our result to the classical (and fractional) Ostrovsky equations, as well 
as rotational versions of various physically relevant surface water wave models, including the Whitham equation for water waves (both with and without surface
tension) as well as the Intermediate Long-Wave equation.  We conclude our study by additionally adding capillary effects to the classical Ostrovsky equation and to the
Ostrovsky variant of the Whitham equation, in an effort to understand the effects of both fluid rotation and surface tension.  Note that in the 
non-rotational case, the effects of surface tension was studied in \cite{HJ_SurfaceTension}.

\subsection*{Notations}\label{sec:notations}
The following notations are going to be used throughout the article. Let us assume $L^2(\mathbb{R})$ denotes the set of real or complex-valued, 
Lebesgue measurable functions $f(x)$ over $\mathbb{R}$ such that
\[
\|f\|_{L^2(\mathbb{R})}=\Big(\frac{1}{2\pi}\int_\R |f|^2~dx\Big)^{1/2}<+\infty \quad 
\]
and $L^2(\mathbb{T})$ denotes the space of $2\pi$-periodic, measurable, real or complex-valued functions over $\mathbb{R}$ such that
\[
\|f\|_{L^2(\mathbb{T})}=\Big(\frac{1}{2\pi}\int^{2\pi}_0 |f|^2~dx\Big)^{1/2}<+\infty. 
\]
For $f \in L^1(\mathbb{R})$, the Fourier transform of $f$ is written as $\hat{f}$ and defined by 
\[
\hat{f}(t)=\frac{1}{\sqrt{2\pi}}\int_{\R} f(x)e^{-itx}dx
\]
It follows from Parseval Theorem that if $f\in L^2(\mathbb{R})$ then $\|\hat{f}\|_{L^2(\mathbb{R})} = \|{f}\|_{L^2(\mathbb{R})}$.
Moreover, for any $s\in \mathbb{R}$, let $H^s(\mathbb{R})$ consist of tempered distributions such that 
\[
\|f\|_{H^s(\mathbb{R})} = \left(\int_{\R}(1+|t|^2)^s|\hat{f}(t)|^2dt\right)^{\frac{1}{2}} < +\infty
\]
Furthermore, the $L^2(\mathbb{T})$-inner product is defined as
\begin{equation}\label{def:i-product}
\langle f,g\rangle=\frac{1}{2\pi}\int^{2\pi}_{0} f(z)\overline{g}(z)~dz
=\sum_{n\in\mathbb{Z}} \widehat{f}_n\overline{\widehat{g}_n},
\end{equation}
where here, for a given $h\in L^2(\mathbb{T})$, 
\[
\widehat{h}_n=\frac{1}{2\pi}\int_0^{2\pi}h(z)e^{-inx}dx
\]
denotes the $n$th Fourier coefficient of $h$.  For any $s>0$, the (periodic) Sobolev space $H^s(\mathbb{T})$ is the subspace of $L^2(\mathbb{T})$ defined via the norm
\[
\left\|f\right\|_{H^s(\mathbb{T})}^2 = \widehat{f}_0^2 + \sum_{n\in\mathbb{Z}}|n|^{2s}|\widehat{f}_n|^2,
\]
and we let $H^s_{\rm even}(\mathbb{T})$ denote the subspace of $H^s(\mathbb{T})$ consisting of even functions.
Let $H^\infty(\mathbb{T})=\bigcap_{k=1}^\infty H^k(\mathbb{T})$. Moreover, the commutator of two operators acting on a Hilbert space is defined by the following
\[
[F , G] := F \circ G - G \circ F.
\]

\

\noindent
{\bf Acknowledgments:}  The work of MAJ was partially funded by the NSF under grant DMS-2108749. Bhavna and AKP are supported by the Science and Engineering Research Board (SERB), Department of Science and Technology (DST), Government of India under grant 
SRG/2019/000741. Bhavna is also supported by Junior Research Fellowships (JRF) by University Grant Commission (UGC), Government of India.
The authors would also like to thank the anonymous referees for their helpful insights, references and suggestions.

\section{ Asymptotically small-amplitude periodic traveling waves}\label{s:2}

To begin, we seek \emph{periodic traveling wave} solutions of \eqref{eq:gost}.  Here and throughout our work, we will always assume the symbol $m(\cdot)$
associated with the Fourier multiplier $\mathcal{M}$ satisfies (H1)-(H3) in Hypothesis \ref{h:m}.  To this end  we make a change of variables
 $z:=k(x-ct)$, where $k>0$ is the wave number and $c$ is the speed of the wave, and note that if $u$ is a $2\pi/k$-periodic traveling wave solution of $\eqref{eq:gost}$ then 
 the function $w(z):=u(k(x-ct),t)$  is a $2\pi$-periodic solution of 
\begin{align}\label{E:w}
    -ck^2 w''+\b k^2\calM_k w''+k^2(w^2)''-\g w=0,
\end{align}
where here $\mathcal{M}_k$ is a Fourier multiplier satisfying
\[
\mathcal{M}_ke^{inz} = m(kn)e^{inz}~~{\rm for}~~n\in\mathbb{Z}.
\]
Note that $m(k)$ is assumed to be real-valued and even. Consequently, \eqref{E:w} is invariant under translation ($z\mapsto z+z_0$) and $z\mapsto -z$ and therefore, we may assume that $w$ is even. Also, since \eqref{E:w} does not possess scaling invariance, we may not a priori assume that $k=1$. 
In fact, the modulational instability results obtained below depends on the wavenumber $k$ of the background periodic wave.

For fixed $\b$ and $\g$, define the operator\footnote{Here, $\alpha$ is as in (H2).} $F:H_{\rm even}^{\alpha+2}(\mathbb{T})\times \R \times \R^+ \to L^2(\mathbb{T})$ as
\begin{align*}
    F(w,c;k)=-ck^2 w''+\b k^2\calM_k w''+k^2(w^2)''-\g w.
\end{align*}
Note that $F$ is well defined by a elementary Sobolev embedding argument.
We seek a non-trivial $2\pi$-periodic solution $w$ of \eqref{E:w} in $H_{\rm even}^{\alpha+2}(\mathbb{T})$ with $c \in \R$ and $k>0$ such that 
\begin{equation*}\label{E:F1}
    F(w,c;k)=0.
\end{equation*}
Note that if $w\in H^{\alpha+2}(\mathbb{T})$ solves \eqref{E:F1} then by a Sobolev inequality we have $\mathcal{M}_kw''\in H^\alpha(\mathbb{T})$.
Therefore, by (H2) in Hypothesis \ref{h:m}, $w\in H^{2(\alpha+1)}(\mathbb{T})$.  By a bootstrap argument it follows that solutions $w\in H^{\alpha+2}(\mathbb{T})$
of \eqref{E:F1} necessarily satisfy $w\in H^\infty(\mathbb{T})$.  

Now, to study \eqref{E:F1} note that $F(0,c;k)=0$ for all $c\in\mathbb{R}$ and $k>0$ and
\[
\partial_wF(0,c;k) = -ck^2\partial_z^2+\beta k^2\mathcal{M}_k\partial_z^2-\gamma
\]
so that, in particular, for each $c\in\mathbb{R}$, $k>0$ and $n\in\mathbb{N}$ we have
\[
\partial_wF(0,c;k)\cos(nz) = \left(c(kn)^2-\beta (kn)^2m(kn)-\gamma\right)\cos(nz).
\]
It follows that 
\[
\ker\left(\partial_wF(0,c_0;k)\right) = {\rm span}\left(\cos(z)\right)
\]
provided that
\[
c=c_0=\frac{\gamma}{k^2}+\beta m(k).
\]
and that $k>0$ is chosen so that that\footnote{While the choice of $c_0$ guarantees that $\cos(z)$ is in the kernel of $\partial_w F(0,c;k)$, the restriction on $k$ guarantees
$\partial_w F(0,c_0;k)\cos(nz)\neq 0$ for any $n\in\mathbb{N}$ with $n\geq 2$.  That is, the restriction on $k$ guarantees that the kernel of $\partial_wF(0,c_0;k)$ is simple.}
\begin{equation}\label{e:k_cond}
k^2 \left( m(kn)-m(k) \right) \neq\dfrac{\g (n^2-1)}{\b n^2}~~~~{\rm for~any}~~n\in\mathbb{N},~~n\geq 2,
\end{equation}
i.e. $k$ should be chosen so that (H3) holds.
Using a Lyapunov-Schmidt argument, one can thus establish the existence of a one-parameter family of non-trivial, even solutions 
$(w(a;k)(\cdot),c(a;k))$ of $F(w,c;k)$ bifurcating
from $w\equiv 0$ and $c=c_0$ and defined for $|a|\ll 1$ provided that $k>0$ satisfies the non-resonance condition \eqref{e:k_cond}.  
This existence argument is elementary and follows the same lines as those in \cite{EK_local,Johnson2013StabilityEquations}, 
and is hence omitted here.  A key feature of the solutions $(w(a;k)(\cdot),c(a;k))$, however, is that they depend analytically on the parameter $a$ for $|a|\ll 1$.  Exploiting that fact,
the next result further establishes analytic expansions for these solutions valid for all $|a|\ll 1$.

\begin{theorem}\label{T:sol}
Suppose that the symbol $m(\cdot)$ in \eqref{e:M} satisfies hypotheses (H1)-(H2).
For all wavenumbers $k>0$ satisfying hypothesis (H3) there exists a one parameter family of solutions of \eqref{E:w}
 given by $u(x,t)=w(a;k)(k(x-c(a;k)t))$ for $a \in \R$ and $|a|$ sufficiently small; $w(a;k)(\cdot)$ is $2\pi$-periodic, 
 even and smooth in its argument, and $c(a;k)$ is even in $a$; $w(a;k)(\cdot)$ and $c(a;k)$ depend analytically on $a$ and $k$. Moreover, 
\begin{align}\label{E:w_ansatz}
    w(a;k)(z)=a\cos(z) + a^2A_2\cos 2z + a^3A_3\cos 3z+ O(a^4),
\end{align}
and
\begin{align}\label{e:c}
    c(a;k)=c_0+a^2c_2+O(a^4)
\end{align}
as $a \to 0$, where
\[c_0=\frac{\g}{k^2}+\b m(k)\]
and
\[
c_2=A_2, \quad A_2 = \dfrac{2k^2}{3\g+4\b k^2(m(k)-m(2k))} \quad and \quad  A_3 = \dfrac{9k^2A_2}{8\g+9\b k^2(m(k)-m(3k))}.
\]
\end{theorem}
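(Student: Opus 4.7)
The plan is to apply a standard Lyapunov-Schmidt reduction to $F: H^{\alpha+2}_{\mathrm{even}}(\T) \times \R \times \R^+ \to L^2(\T)$ at the trivial branch $(w, c) = (0, c_0)$ with $c_0 = \g/k^2 + \b m(k)$, following the template laid out in, e.g., \cite{EK_local}. That $F$ is real analytic in $(w, c)$ follows from (H2) (which guarantees $\calM_k : H^{\alpha+2}(\T) \to H^2(\T)$ is bounded) together with the algebra property of $H^{\alpha+2}(\T)$ controlling the quadratic term; the trivial branch $F(0, c; k) = 0$ is immediate.

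The first substantive step is to analyze $L := \partial_w F(0, c_0; k) = -c_0 k^2 \Dz^2 + \b k^2 \calM_k \Dz^2 - \g$, which acts as a Fourier multiplier on $\T$ with symbol $\sigma(n) = c_0 k^2 n^2 - \b k^2 m(nk) n^2 - \g$. Choosing $c = c_0$ forces $\sigma(\pm 1) = 0$, and $\sigma(\pm j) = 0$ for some integer $j \geq 2$ is equivalent to the resonance condition $k^2(m(jk) - m(k)) = \g(j^2-1)/(\b j^2)$ --- precisely the set excluded by the hypotheses of the theorem. Under this non-resonance, $\ker L|_{H^{\alpha+2}_{\mathrm{even}}} = \mathrm{span}\{\cos z\}$, and the restriction of $L$ to $X := \{v \in H^{\alpha+2}_{\mathrm{even}}(\T) : \l v, \cos z\r = 0\}$ is a bijection onto $L^2_{\mathrm{even}}(\T) \cap \{\cos z\}^\perp$ with bounded inverse; the growth bound in (H2) ensures $|\sigma(n)|$ grows like $n^{\alpha+2}$ for large $n$, giving the uniform control needed for invertibility.

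Next, I would decompose $w = a\cos z + v$ with $v \in X$ and apply the implicit function theorem to $\mathbb{P}\,F(a\cos z + v, c; k) = 0$ (where $\mathbb{P}$ denotes orthogonal projection onto the closure of $X$ in $L^2$) at the base point $(a, v, c) = (0, 0, c_0)$. This produces a unique analytic $v = v(a, c; k)$ with $v(0, c_0; k) = 0$ and reduces the problem to the scalar bifurcation equation
\begin{equation*}
G(a, c; k) := \l F(a\cos z + v(a, c; k), c; k), \cos z\r = 0.
\end{equation*}
The $\Z_2$ symmetry $w(z) \mapsto -w(z + \pi)$ of $F$ (combined with the induced action on $a$) forces $G(a, c; k) = a\,\widetilde G(a^2, c; k)$ for an analytic $\widetilde G$, with $\partial_c \widetilde G(0, c_0; k) \neq 0$ by direct computation. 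A second application of the implicit function theorem to $\widetilde G$ then yields $c$ as an analytic function of $a^2$ and $k$, producing the even-in-$a$ expansion of \eqref{e:c}.

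Finally, the explicit coefficients $A_2$, $A_3$ and $c_2$ are obtained by inserting the ansatz \eqref{E:w_ansatz} together with the corresponding expansion of $c(a; k)$ into $F(w, c; k) = 0$ and matching Fourier coefficients at each order in $a$: the order-$a^2$ equation on the $\cos 2z$ mode inverts $L$ at the second harmonic to give $A_2$, the order-$a^3$ equation on the $\cos 3z$ mode gives $A_3$, and the order-$a^3$ equation on the $\cos z$ mode --- which is precisely the leading term of the bifurcation equation $G = 0$ --- pins down $c_2 = A_2$. The main technical subtlety is verifying the boundedness of $L^{-1}$ on the complementary space $X$ uniformly in the relevant parameters, which is where (H2) enters crucially; the rest is standard bifurcation-theoretic bookkeeping.
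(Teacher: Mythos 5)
Your overall strategy is exactly the paper's: the paper defines the same map $F$, computes $\ker\partial_wF(0,c;k)={\rm span}\{\cos z\}$ at $c=c_0$, identifies the excluded wavenumbers as precisely those where the symbol vanishes at some mode $n\geq 2$, and then invokes a Lyapunov--Schmidt argument ``along the same lines as'' \cite{EK_local}, omitting the details you supply; your symbol computation, non-resonance set, invertibility of the reduced linearization on the complement, and the coefficient matching giving $A_2$, $A_3$ and $c_2=A_2$ all agree with what that omitted argument produces.

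One step, however, is wrong as written: $w(z)\mapsto -w(z+\pi)$ is \emph{not} a symmetry of $F$, because the quadratic term transforms as $k^2(w^2)''(z+\pi)$ \emph{without} the sign flip that the linear terms acquire. The oddness $G(a,c;k)=a\,\widetilde G(a^2,c;k)$ is nevertheless true, but it comes from plain translation: $\tau w:=w(\cdot+\pi)$ commutes with $F$, preserves evenness and the complement $X$ (since $\tau\cos z=-\cos z$), so uniqueness in the implicit function theorem gives $v(-a,c)=\tau v(a,c)$, and then $\l \tau g,\cos z\r=-\l g,\cos z\r$ yields $G(-a,c)=-G(a,c)$. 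Equivalently, on the branch itself one has $w(-a;k)(z)=w(a;k)(z+\pi)$ with no overall minus sign, which also gives $c(-a;k)=c(a;k)$. Replace your $\Z_2$ map with this and the argument goes through. Two smaller points to tidy: for $-1\leq\alpha<0$ the asserted mapping property $F:H^{\alpha+2}(\T)\to L^2(\T)$ needs care, since the algebra property only puts $(w^2)''$ in $H^{\alpha}(\T)$ (the paper glosses this as well), and the symbol actually grows like $n^{\max(2,\alpha+2)}$, which is what the bounded invertibility on $X$ really uses. Finally, evenness of $c$ in $a$ only justifies a remainder $O(a^4)$ in \eqref{e:c}; the stated $O(a^6)$ requires additionally checking that the $a^4$ coefficient vanishes, which neither your proposal nor the paper's (omitted) proof addresses.
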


\begin{proof}
As this argument is standard, here we only sketch the details.  The existence of the solutions $(w(a;k)(\cdot),c(a;k))$ for a fixed $k>0$ 
follows by an elementary Lyapunov-Schmidt argument: for similar arguments,
see, for example, \cite{EK_local}.  As a result, for a fixed $k>0$ it follows that the solutions $(w(a,k)(\cdot),c(a,k))$ are analytic in $a$ for $|a|\ll 1$ and hence may be expanded 
as\footnote{The relation $c(a;k)=c(-a;k)$ follows from the details of the Lyapunov-Schmidt argument discussed above.  See, for example, \cite{EK_local,Johnson2013StabilityEquations}.}
\begin{equation}\label{e:expand1} 
\left\{\begin{aligned}
w(a;k)(z)&=a\cos(z)+a^2w_2(k)(z)+a^3w_3(k)(z)+\mathcal{O}(a^4)\\
c(a;k)&=c_0+c_2(k)a^2+c_4(k) a^4+ \mathcal{O}(a^6)
\end{aligned}\right.
\end{equation}
Substituting these expansions into the profile equation \eqref{E:w} yields a hierarchy of compatability conditions indexed by the order of the small parameter $a$.
By construction, the first non-trivial equation occurs at $\mathcal{O}(a^2)$, which reads

\begin{equation}\label{e:solve1}
\partial_wF(0,c;k)w_2= 2k^2\cos{2z}
\end{equation}
Requiring, via the Fredholm Alternative, that  the right-hand side of \eqref{e:solve1} be orthogonal to the kernel of the (symmetric) operator $\partial_wF(0,c;k)$ yields 
the stated equation for $c_2=c_2(k)$, and then subsequently solving \eqref{e:solve1} results in the stated formula for the function $w_2$.  Continuing
to higher order equations in $a$, using again the Fredholm alternative and subsequently solving the resulting equation, yields the remainder of the formulae.  We omit
 the remainder of the details, and instead refer the interested reader to \cite{Johnson2013StabilityEquations}[Lemma 2.1], for instance.
\end{proof}

\begin{remark}
We note, in particular, that the solutions $	w(a;k)(z)$ constructed above all satisfy a mean-zero constraint, i.e. they satisfy $\int_0^{2\pi}w(z)dz=0$.  This of course
is a property of all localized or periodic traveling wave solutions of the generalized Ostrovsky equation considered here, as integrating
\eqref{eq:gost} over a period (for periodic traveling waves) or on the whole line (for localized traveling waves) yields the mean-zero requirement since $\gamma\neq 0$.
\end{remark}

\begin{remark}\label{R:Lighthill2}
From Theorem \ref{T:sol}, we see that the nonlinear dispersion relation $\omega(a;k)=kc(a;k)$ can be expanded as
\[
\omega(a;k) = \omega(k) + a^2\frac{2k^3}{3\gamma+4\beta k^2\left(m(k)-m(2k)\right)}+\mathcal{O}(a^4).
\]
Noting that
\[
c_p(k)-c_p(2k) = \frac{3\gamma}{4k^2}+\beta\left(m(k)-m(2k)\right)
\]
and that $\frac{dc_g}{dk}(k)=\omega''(k)$ by definition,
it follows that the Lighthill condition given in \eqref{e:Lighthill} can be expressed as
\[
\omega''(k)\omega_2(k) = \frac{k}{2\left(c_p(k)-c_p(2k)\right)}\cdot\frac{dc_g}{dk}(k),
\]
and hence the sign of the rigorous modulational instability index from Theorem \ref{t:1} agrees exactly with the Lighthill condition \eqref{e:Lighthill}.
\end{remark}

\section{Modulational Instability Index}\label{s:3}

 Throughout this section, let $w=w(x;a,k)$ with $k>0$ and $|a|\ll 1$ be a small amplitude $2\pi/k$-periodic traveling wave solution of \eqref{eq:gost} with
wave speed $c=c(a;k)$, whose existence follows from Theorem \ref{T:sol} above.  The goal of this section is to study
the modulational stability of the wave $w$.

Linearizing \eqref{eq:gost} about $w$ in the spatial  frame of reference $z=k(x-ct)$ we arrive at the linear evolution equation
\begin{align*}
    k(v_t - ck v_z +\b k\calM_k v_z+2k(w v)_z)_z=\g v.
\end{align*}
governing the perturbation $v(z,t)$.
We seek a solution of the form $v(z,t) = e^{\frac{\lambda}{k} t} \Tilde{v}(z)$, $\lambda\in \mathbb{C}$ and $\Tilde{v} \in L^2(\R)$ to arrive at the equation
\begin{align}\label{E:opt}
  \mathcal T^\lambda_{k,a} \Tilde{v} := (\lambda\partial_z +k^2\partial^2_z(-c + \b \calM_k + 2w )- \g)\Tilde{v} = 0,
\end{align}
where here $\mathcal{T}^\lambda_{k,a}:L^2(\mathbb{R})\to L^2(\mathbb{R})$ is considered as a closed, densely defined linear operator.

\begin{definition} \label{def:stable}
The periodic traveling wave solution $w$ of \eqref{eq:gost} is spectrally stable  with respect to square integrable perturbations if the operator $\mathcal T^\lambda_{k,a}$ is invertible
on $L^2(\mathbb{R})$ for every $\lambda \in \mathbb{C}$ with $\Re(\lambda)>0$.  Otherwise, it is deemed to be spectrally unstable.
\end{definition}

\begin{remark}
Note that since \eqref{E:opt} is invariant under the transformation $(v,\lambda)\mapsto (\bar{v},\bar{\lambda})$, as well as the transformation $(z,\lambda)\mapsto (-z,-\lambda)$,
the set of $\lambda\in\mathbb{C}$ where the operator $\mathcal{T}^\lambda_{k,a}$ fails to be invertible is symmetric with respect to reflections about the real and imaginary axes.
Consequently, $w$ is spectrally stable if and only if $\mathcal{T}^\lambda_{k,a}$ is invertible for all $\lambda\in\mathbb{C}$ with $\Re(\lambda)\neq 0$.
\end{remark}

Since the coefficients of the operator $\mathcal T^\lambda_{k,a}$ are $2\pi$-periodic, we can use the Floquet theory such that all solutions of \eqref{E:opt} in $L^2(\R)$ or $C_b(\R)$ are of the form $\Tilde{v}(z)=e^{i\xi z}V(z)$ where $\xi\in\left(-1/2,1/2\right]$ is the Floquet exponent and $V$ is a $2\pi$-periodic function.  As a  result, we get the following.

\begin{lemma}\label{lem:equiv}
The linear operator $\mathcal T^\lambda_{k,a}$ is invertible with bounded inverse on $L^2(\R)$ if and only if the linear operators
\begin{equation}
\mathcal T^\lambda_{k,a, \xi} := \lambda(\partial_z+i\xi) +k^2(\partial_z+i\xi)^2(-c + \b e^{i\xi z}\calM_ke^{-i\xi z} + 2w )- \g \end{equation}
acting in $L^2(\mathbb{T})$ with domain $H^{\alpha+2}(\mathbb{T})$ are invertible for all $\xi\in\left(-\frac12,\frac12\right]$. Moreover, $\mathcal T^\lambda_{k,a, \xi}$ is invertible in $L^2(\mathbb{T})$ if and only if zero is not an $L^2(\mathbb{T})$-eigenvalue of $\mathcal T^\lambda_{k,a, \xi}$.
\end{lemma}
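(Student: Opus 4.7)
The plan is to exploit the $2\pi$-periodicity of the coefficients of $\mathcal{T}^\lambda_{k,a}$ via a Bloch--Floquet decomposition. Write $\tilde v \in L^2(\mathbb{R})$ as
\[
\tilde v(z)=\int_{-1/2}^{1/2} e^{i\xi z} V(z,\xi)\,d\xi,
\]
where for each $\xi$ the fiber $V(\cdot,\xi)$ belongs to $L^2(\mathbb{T})$; this Bloch transform $\mathcal{B}$ is an isometric isomorphism from $L^2(\mathbb{R})$ onto the direct integral $\int^{\oplus}_{(-1/2,1/2]} L^2(\mathbb{T})\,d\xi$. Substituting $\tilde v=e^{i\xi z}V$ into \eqref{E:opt} and using that $\partial_z(e^{i\xi z}V)=e^{i\xi z}(\partial_z+i\xi)V$, while the Fourier multiplier satisfies $\mathcal{M}_k(e^{i\xi z}V)=e^{i\xi z}(e^{-i\xi z}\mathcal{M}_k e^{i\xi z})V$, one obtains $\mathcal{T}^\lambda_{k,a}(e^{i\xi z}V)=e^{i\xi z}\,\mathcal{T}^\lambda_{k,a,\xi}V$. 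Hence under $\mathcal{B}$ the operator $\mathcal{T}^\lambda_{k,a}$ becomes the direct integral $\int^{\oplus}\mathcal{T}^\lambda_{k,a,\xi}\,d\xi$.

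From the general theory of direct integrals, such an operator is invertible on the total space if and only if the fibers $\mathcal{T}^\lambda_{k,a,\xi}$ are uniformly invertible on $L^2(\mathbb{T})$ in $\xi\in(-1/2,1/2]$, i.e.\ $\{\|(\mathcal{T}^\lambda_{k,a,\xi})^{-1}\|_{L^2(\mathbb{T})}\}_\xi$ is uniformly bounded. Because the coefficients of $\mathcal{T}^\lambda_{k,a,\xi}$ depend analytically on $\xi$ and the compact interval $(-1/2,1/2]$ (viewed as a circle by periodicity $\xi\mapsto\xi+1$) is the full parameter range, one then upgrades pointwise invertibility to uniform invertibility by the continuity of inversion on the open set of invertible operators together with compactness of the Floquet parameter. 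This yields the first equivalence.

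For the second claim I would appeal to Fredholm theory. Writing
\[
\mathcal{T}^\lambda_{k,a,\xi}=\underbrace{\beta k^2(\partial_z+i\xi)^2\bigl(e^{-i\xi z}\mathcal{M}_k e^{i\xi z}\bigr)}_{=:A_\xi}+\underbrace{\lambda(\partial_z+i\xi)+k^2(\partial_z+i\xi)^2\bigl(-c+2w(\cdot;a,k)\bigr)-\gamma}_{=:B_{\xi,a,\lambda}},
\]
the Fourier-symbol of $A_\xi$ on the $n$th mode is $-\beta k^2(n+\xi)^2 m\bigl(k(n+\xi)\bigr)$, which by (H2) grows like $|n|^{\alpha+2}$; hence $A_\xi:H^{\alpha+2}(\mathbb{T})\to L^2(\mathbb{T})$ is Fredholm of index $0$ (after handling a finite-dimensional kernel at small $n$). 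The perturbation $B_{\xi,a,\lambda}$ involves only derivatives of order at most $2<\alpha+2+1$ together with multiplication by the smooth periodic function $w$, so it is relatively compact with respect to $A_\xi$ by the compact embedding $H^{\alpha+2}(\mathbb{T})\hookrightarrow H^s(\mathbb{T})$ for $s<\alpha+2$. Consequently $\mathcal{T}^\lambda_{k,a,\xi}$ is Fredholm of index $0$, and invertibility reduces to triviality of its kernel, i.e.\ to $0$ not being an $L^2(\mathbb{T})$-eigenvalue.

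The main obstacle I expect is making the Fredholm step rigorous uniformly in the regime $\alpha\ge -1$ permitted by (H2): when $\alpha=-1$ the dispersion gain is only one derivative, which is tight, and extra care must be taken both in defining the domain $H^{\alpha+2}(\mathbb{T})$ consistently and in verifying that $B_{\xi,a,\lambda}$ is genuinely $A_\xi$-compact. Beyond that, the proof is a relatively standard Floquet decomposition argument.
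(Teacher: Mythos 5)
Your overall architecture --- the Bloch transform turning $\mathcal T^\lambda_{k,a}$ into the direct integral of the fibers $\mathcal T^\lambda_{k,a,\xi}$ over the Floquet circle, compactness plus continuity in $\xi$ upgrading pointwise invertibility to the uniform invertibility that direct-integral theory actually requires, and a Fredholm-index-zero argument for the second claim --- is precisely the standard route that the paper itself invokes: its ``proof'' is only a citation to Haragus and to Johnson, where this decomposition is carried out in similar settings. So on strategy you match the intended argument, and your first equivalence is fine modulo the usual continuity-in-$\xi$ details (which need mild symbol regularity beyond (H1)--(H3), tacitly assumed by the paper since it later uses $m'$ and $m''$).

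The Fredholm step, however, contains a genuine error. Relative compactness of $B_{\xi,a,\lambda}$ with respect to $A_\xi$ requires the order of $B$, which is $2$ because of the term $k^2(\partial_z+i\xi)^2\bigl(-c+2w\bigr)$, to be \emph{strictly less} than the order of $A_\xi$, which is $\alpha+2$ by (H2); your inequality ``$2<\alpha+2+1$'' is an off-by-one, and the correct requirement $2<\alpha+2$ means your decomposition works only for $\alpha>0$. Hypothesis (H2) allows $\alpha\geq -1$, and the paper's applications include the Ostrovsky--Whitham equation with $\alpha=-1/2$, where your ``perturbation'' $B$ has strictly \emph{higher} order than your ``main part'' $A_\xi$, so $B$ is not even $A_\xi$-bounded, let alone $A_\xi$-compact; relatedly, for $\alpha<0$ the domain must be $H^{\max(2,\alpha+2)}(\T)$ rather than $H^{\alpha+2}(\T)$, since otherwise $(\partial_z+i\xi)^2$ does not map into $L^2(\T)$ (the paper's own statement is loose on this point, as you half-noticed). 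The repair is to take as unperturbed operator the full constant-coefficient multiplier $\mathcal T^\lambda_{0,\xi}$ at the wave speed $c$, with symbol $i\lambda(n+\xi)+k^2(n+\xi)^2\bigl(\beta m(k(n+\xi))-c\bigr)-\gamma$, which is elliptic of order $\max(2,\alpha+2)$ (for $\alpha\leq 0$ this uses $c\approx c_0\neq 0$, i.e.\ nondegeneracy of $\beta m-c$ at high frequency), and then to split the remaining term $2k^2(\partial_z+i\xi)^2\bigl(w\,\cdot\bigr)$ as $2k^2w(\partial_z+i\xi)^2$ plus commutator terms of order at most one: the lower-order pieces are relatively compact, while the same-order piece has relative bound $O(|a|)$, so for $|a|$ sufficiently small the Fredholm property and the index $0$ persist. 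Note this is exactly where the small-amplitude hypothesis --- which your Fredholm argument never uses --- genuinely enters.
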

\begin{proof}
We refer readers to \cite{Haragus2008STABILITYEQUATION,Haragus2011TransverseEquation,Johnson2013StabilityEquations} for detailed proofs in similar situations.  
It is straightforward to adapt those results to our case.
\end{proof}

Lemma~\ref{lem:equiv} reduces the invertibility problem \eqref{E:opt} in $L^2(\mathbb{R})$ to a one-parameter family of invertibility problems
\begin{align}\label{E:opt1}
  \mathcal T^\lambda_{k,a,\xi} \phi := (\lambda(\partial_z+i\xi) +k^2(\partial_z+i\xi)^2(-c + \b e^{i\xi z}\calM_ke^{-i\xi z} + 2w )- \g)\phi = 0
\end{align} 
in $L^2(\mathbb{T})$ indexed by $\xi\in \left(-\frac12,\frac12\right]$. 
From Definition \ref{def:stable}, it follows that the periodic traveling wave $w$ is spectrally unstable with respect to square integrable perturbations in $L^2(\mathbb{R})$ 
if and only if for some $\xi\in\left(-\frac12,\frac12\right]$ the operator $\mathcal T^\lambda_{k,a,\xi}$ acting on $L^2(\mathbb{T})$ has non-trivial kernel for some $\lambda\in\C$  
with $\Re(\lambda)>0$. 
While determining the full set of $\lambda$ for which the operator $\mathcal{T}^\lambda_{k,a,\xi}$ is invertible for all $\xi\in\left(-\frac12,\frac12\right]$ is already a daunting task,
in this work our focus is on a particular subclass of the possible instabilities.

\begin{definition}\label{D:MI}
A periodic traveling wave solution $w(a;k)$ of \eqref{eq:gost} is said to be modulationally stable if the associated linear operators $\mathcal{T}^\lambda_{a,k,\xi}$
is invertible on $L^2(\mathbb{T})$ for all $|(\lambda,\xi)|\ll 1$ with $\Re(\lambda)\neq 0$.  Otherwise, the solution $w(a;k)$ is modulationally unstable.
\end{definition}

\begin{remark}
In essence, the above states that the periodic traveling wave $w(a;k)$ is modulationally stable if the operator $\mathcal{T}^\lambda_{a,k}$ is invertible
for all $\lambda$ off the imaginary axis in a sufficiently small neighborhood of $\lambda=0$.  It follows that if the solution $w$ is modulationally
unstable, then it is indeed spectrally unstable in the sense of Definition \ref{def:stable}.  However, being modulationally stable does not automatically
imply spectral stability since the latter requires that $\mathcal{T}^\lambda_{a,k}$ is invertible for \emph{all} $\lambda\in\mathbb{C}$ with $\Re(\lambda)\neq 0$, not just
invertible for such $\lambda$ near the origin.  
\end{remark}

\begin{remark}
We note that modulational instabilities is a fundamental feature of many nonlinear systems, including those arising in the modeling of nonlinear optics as well
as surface water waves.  The connection between such ``spectral" modulational instabilities, as described above, and the dynamic instability of periodic traveling wave
solutions to slow modulations (via Whitham's theory of modulations) has been studied in many works, including \cite{JP20,Bronski2016ModulationalType,JZ10}.
In addition to the references mentioned above, the reader can consult \cite{W74} for a mathematical and physical discussion.
\end{remark}

We now list some properties of the operator $\mathcal T^\lambda_{k,a,\xi}$, which
may be readily verified by direct calculation.

\begin{proposition}[Symmetric Property]\label{p:1}
The operator $\mathcal T^\lambda_{k,a,\xi}$ acting on $L^2(\T)$ has following properties:
\begin{enumerate}
    \item $\mathcal T^\lambda_{k,a,\xi}(z)=\overline{\mathcal T^{-\overline{\lambda}}_{k,a,\xi}(-z)}$,
    \item $\mathcal T^\lambda_{k,a,\xi}=\overline{\mathcal T^{\overline{\lambda}}_{k,a,-\xi}}$.
    \end{enumerate}
\end{proposition}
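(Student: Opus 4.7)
Both identities reduce to operator-level calculations that rely on three reality/symmetry features of the coefficients of $\mathcal T^\lambda_{k,a,\xi}$: the scalars $c,k,\b,\g,\xi$ are real; the background wave $w$ is real and even in its spatial variable (by Theorem~\ref{T:sol}); and the Fourier multiplier $\calM_k$ has the real, even symbol $m(k\cdot)$ by Hypothesis~\ref{h:m}(H1). The last point implies that $\calM_k$ commutes with complex conjugation and with the reflection $z\mapsto -z$, i.e.\ $\overline{\calM_k f}=\calM_k\bar f$ and $(\calM_k f)(-z)=(\calM_k[f(-\cdot)])(z)$. These are the only ingredients needed beyond algebraic bookkeeping.

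For (2), the plan is to write out $\mathcal T^{\bar\lambda}_{k,a,-\xi}$ by substituting $\bar\lambda$ for $\lambda$ and $-\xi$ for $\xi$ in the defining formula, and then to take complex conjugates termwise. Each factor of $-i\xi$ returns to $i\xi$, $\bar\lambda$ to $\lambda$, $e^{-i\xi z}$ to $e^{i\xi z}$, and the real-coefficient factors $k^2, c, w, \g, \b, \partial_z, \calM_k$ are unchanged, so the resulting operator is precisely $\mathcal T^\lambda_{k,a,\xi}$.

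For (1), I would combine reflection with conjugation. Starting from $\mathcal T^{-\bar\lambda}_{k,a,\xi}$, I first substitute $-z$ for $z$ throughout; this sends $e^{\pm i\xi z}\mapsto e^{\mp i\xi z}$, leaves $w$ invariant by evenness, leaves $\calM_k$ unchanged by the even-symbol calculation above, and replaces $\partial_z$ by $-\partial_z$. In particular $\partial_z+i\xi\mapsto -(\partial_z-i\xi)$ while $(\partial_z+i\xi)^2\mapsto (\partial_z-i\xi)^2$, so the substituted operator takes the form $\bar\lambda(\partial_z-i\xi)+k^2(\partial_z-i\xi)^2(-c+\b e^{-i\xi z}\calM_k e^{i\xi z}+2w)-\g$. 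Taking complex conjugates termwise, exactly as in the previous paragraph, flips each $-i\xi$ back to $i\xi$ and $\bar\lambda$ back to $\lambda$, recovering $\mathcal T^\lambda_{k,a,\xi}$.

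No genuine obstacle is anticipated; the one point requiring care is that the composition $e^{i\xi z}\calM_k e^{-i\xi z}$, viewed as a conjugated Fourier multiplier on $L^2(\T)$ with symbol $m(k(n+\xi))$, transforms correctly under both reflection and conjugation, which is forced by the reality and evenness of $m$. With those invariances in hand, both (1) and (2) are essentially bookkeeping, and one could alternatively formalize the computation by introducing the unitaries $R\phi(z):=\phi(-z)$ and $C\phi(z):=\overline{\phi(z)}$ and verifying the identities at the level of conjugating $\mathcal T^\lambda_{k,a,\xi}$ by $CR$ and by $C$ respectively.
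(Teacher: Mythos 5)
Your proposal is correct and coincides with the paper's approach: the paper gives no written argument beyond asserting the identities ``may be readily verified by direct calculation,'' and your termwise reflection/conjugation computation --- resting on the reality and evenness of $m$ (so that $\calM_k$, and hence the conjugated multiplier $e^{i\xi z}\calM_k e^{-i\xi z}$ with symbol $m(k(n+\xi))$, commutes appropriately with $C$ and $R$), the evenness and realness of $w$ from Theorem~\ref{T:sol}, and the reality of $c,k,\b,\g,\xi$ --- is exactly that calculation, including the one delicate sign bookkeeping step $-\bar\lambda(\partial_z+i\xi)\mapsto \bar\lambda(\partial_z-i\xi)$ under $z\mapsto -z$. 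Nothing further is needed.
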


 We now set forth the study of the $L^2(\T)$-kernel of the operator $\mathcal T^{\lambda}_{k,a,\xi}$ for $\xi\in\left(-\frac{1}{2},\frac{1}{2}\right]$ and $|a|$ sufficiently small. 
Note that, thanks to Proposition~\ref{p:1}(2), it is sufficient to take $\xi\in[0,1/2]$.
Since $k$ is fixed, in what follows, we denote $\mathcal T^{\lambda}_{k,a,\xi}$ by $\mathcal T^{\lambda}_{a,\xi}$.

We begin by discussing the case $a=0$, corresponding to the trivial solution $w=0$.  A straightforward Fourier calculation yields that
\begin{equation}
    \mathcal T^\lambda_{0,\xi}e^{inz}= \left(i\lambda(n+\xi)+\g((n+\xi)^2-1) +\b k^2(n+\xi)^2( m(k) - m(k(n+\xi)) )\right)e^{inz}=0 
\end{equation}
for all $n\in \mathbb{Z}$ and $\xi\in \left[0,1/2\right]$.  
The kernel of $\mathcal T^\lambda_{0,\xi}$ is thus non-trivial when 
\begin{equation}\label{e:Omega}
    \lambda=i\left(\g\left(n+\xi-\dfrac{1}{n+\xi}\right)+n\b k^2( m(k) - m(k(n+\xi)) )\right)=:i\Omega_{n,\xi}, \quad n\in \mathbb{Z}.
\end{equation}
and hence the trivial solution $w=0$  
of \eqref{eq:gost} is spectrally stable to square integrable perturbations as expected.  
Moreover, for $|a|$ small, because of Proposition~\ref{p:1}, values of $\lambda$ in \eqref{e:Omega} will bifurcate to leave imaginary axis only when two of them collide on imaginary axis. Therefore, we obtain instability for small $|a|$ if for some $\lambda$ in \eqref{e:Omega}, the kernel of $\mathcal T^{\lambda}_{0,\xi}$ is at least two-dimensional. Since we are only concerned with modulational (in)stability, we will only consider small values of $|\xi|$ and $|\lambda|$.

In particular, notice that the two values $\Omega_{1,\xi}$ and $\Omega_{-1,\xi}$ collide at $\lambda=0$ when $\xi=0$: for all other $n\in\mathbb{N}$
and $\xi$ we have $\Omega_{n,\xi}\neq 0$.  
Furthermore, the two-dimensional generalized kernel for $\mathcal{T}^{0}_{0,0}$ can be continued into a two-dimensional critical subspace
\[
\Sigma_{0,\xi}=\ker\left(\mathcal{T}^{i \Omega_{1,\xi}}_{0,\xi}\right)\oplus \ker\left(\mathcal{T}^{i \Omega_{-1,\xi}}_{0,\xi}\right)
\]
with ($\xi$-independent) orthogonal basis
\begin{equation}\label{e:basis1}
\phi_1(z) = \cos(z),\quad \phi_2(z)=\sin(z).
\end{equation}
For all other values of $n\neq -1,1$, 
 the kernel of $\mathcal T^{i\Omega_{n,0}}_{0,0}$ is one-dimensional and therefore, from Proposition~\ref{p:1}(1), can not lead to instability
for $|(a,\xi)|\ll 1$.  We thus aim to track how the values
$\Omega_{\pm 1,0}$ bifurcate from the origin for $|(a,\xi)|\ll 1$.  
To this end, we note that, for $|a|$ and $\xi$ small, the operator $\mathcal T^{\lambda}_{a,\xi}$ is a perturbation of $\mathcal T^{\lambda}_{0,0}$ with
\[
\|\mathcal T^{\lambda}_{a,\xi}-\mathcal T^{\lambda}_{0,0}\|_{L^2(\T)\to L^2(\T)}=O(|a|+|\xi|) 
\]
uniformly in operator norm as $|a|,\xi\to 0$. Consequently, for $|(a,\xi)|\ll 1$ there can be only two values $\lambda=i\Omega_{\pm 1,a,\xi}$ in a sufficiently
small neighborhood of the origin where the operator $\mathcal{T}^\lambda_{a,\xi}$ fails to be invertible and, further, the functions
\[
(a,\xi)\mapsto\Omega_{\pm 1,a,\xi}
\]
are analytic in $(a,\xi)$ for $|(a,\xi)|\ll 1$ and limit back to $\Omega_{\pm 1,0}=0$ as $(a,\xi)\to (0,0)$.  Further, one has a two-dimensional critical subspace
\[
\Sigma_{a,\xi}=\ker\left(\mathcal{T}^{i \Omega_{1,a,\xi}}_{a,\xi}\right)\oplus \ker\left(\mathcal{T}^{i \Omega_{-1,a,\xi}}_{a,\xi}\right)
\]
which is an analytical continuation of that found at $a=0$ above.  

Our goal is now to track the critical values $\lambda(a,\xi)=i\Omega_{\pm 1,a,\xi}$ for $|(a,\xi)|\ll 1$.
To this end, our strategy is essentially to project the operator equation $\mathcal{T}^{\lambda}_{a,\xi}v=0$ onto the two-dimensional critical subspace
$\Sigma_{a,\xi}$ above.  More precisely, we will compute a suitable basis $\{\phi_j(z;a,\xi)\}_{j=1,2}$ for $\Sigma_{a,\xi}$ and then compute the $2\times 2$
matrix 
\begin{align}\label{eq:bmat1}
\mathcal{B}_{a,\xi}^\lambda = \left[B_{ij}\right]_{i,j=1}^2~~{\rm with}~~ B_{ij} =  \frac{\langle \mathcal T^\lambda_{a,\xi}\phi_i(z;a,\xi),\phi_j(z;a,\xi)\rangle}{\langle\phi_i(z;a,\xi),\phi_i(z;a,\xi)\rangle}. 
\end{align}
The critical values $\lambda(a,\xi)$ are then found by solving the algebraic equation
\begin{align}\label{eq:cheq1p}
 \det(\mathcal{B}^\lambda_{a,\xi} ) = 0
\end{align}
for the variable $\lambda$.  
It remains then to find a suitable basis for the critical subspace $\Sigma_{a,\xi}$, and then to compute the appropriate projections above.

To compute a basis for $\Sigma_{a,\xi}$ that is compatible  with $\Sigma_{0,\xi}$, note that differentiating the profile equation \eqref{E:w} with respect to 
$z$ and $a$ gives
\[
\mathcal T^{0}_{a,0} (\partial_z w) = 0, \quad \mathcal T^{0}_{a,0} (\partial_a w) = k^2c_a\partial_z^2 w.
\]
Using the expansions in Theorem \ref{T:sol}, we thus obtain a normalized basis for the critical subspace $\Sigma_{a,0}$, i.e.
the  generalized kernel of $\mathcal{T}^0_{a,0}$, as
\begin{equation}\label{e:eig1}
    \phi_1(z;a,0)=-\frac{1}{a}(\Dz w)(z)=\sin z+2aA_2\sin 2z+3a^2A_3\sin 3z+O(a^4)
\end{equation}
and 
\begin{equation}\label{e:eig2}
    \phi_2(z;a,0)=(\partial_a w)(z)=\cos z+2aA_2\cos 2z+3a^2A_3\cos 3z+O(a^4),
\end{equation}
where here the values $A_j$ are as in Theorem \ref{T:sol}.  These functions provide asymptotic extensions for the generalized kernel of $\mathcal{T}^0_{0,0}$ and, in fact, they provide
an asymptotic extension for the ($\xi$-independent) basis of the critical subspace $\Sigma_{0,\xi}$ provided in \eqref{e:basis1}.  By spectral perturbation theory, it follows
that the functions $\phi_1(\cdot;a,0)$ and $\phi_2(\cdot;a,0)$ continue into a $\xi$-dependent basis for the critical subspace $\Sigma_{a,\xi}$ for $|(a,\xi)|\ll 1$.  We note, however,
that as in \cite{Johnson2013StabilityEquations,Hur2015ModulationalWaves,HJ_SurfaceTension}, the variations in the basis functions $\phi_j(\cdot;a,\xi)$ does not play a role in the asymptotic
calculation below as they contribute only to higher-order terms than what are needed here.  Thus, below, the calculations are done with the $\xi$-independent basis $\phi_j(\cdot;a,0)$.

Continuing our strategy, we now compute the action of $\mathcal T^\lambda_{a,\xi}$ on the critical subspace $\Sigma_{a,\xi}$.
Here $\langle\hspace{2px}\cdot\hspace{2px},\hspace{2px}\cdot\hspace{2px}\rangle$ denotes the $L^2(\mathbb{T})$- inner product as defined in \eqref{def:i-product}.
Now, for $\xi$ and $|a|$ sufficiently small, we expand $\mathcal{T}^\lambda_{a,\xi}$ using Baker-Campbell-Hausdorff formula as 
\begin{equation}\label{e:op}
    \mathcal T^\lambda_{a,\xi} = T_{0,a} + i\xi T_{1,a}-\dfrac{\xi^2}{2} T_{2,a} + O(\xi^3)
\end{equation}as $\xi\to 0$
where
\begin{align*}
 &T_{0,a} := \mathcal T^\lambda_{a,0}=T_0+2k^2\partial z^2(a\cos z+a^2A_2\cos 2z)+\mathcal{O}(a^3)\\
 &T_{1,a} := [\mathcal T^\lambda_{a,0},z]=T_1+4k^2\partial z(a\cos z+a^2A_2\cos 2z)+\mathcal{O}(a^3)\\
 &T_{2,a} := [[\mathcal T^\lambda_{a,0},z],z]=T_2+4k^2(a\cos z+a^2A_2\cos 2z)+\mathcal{O}(a^3)
\end{align*}
and
\[
T_0=\mathcal T^\lambda_{0,0},\quad T_1=[T_0,z]\quad \text{and}\quad T_2=[[T_0,z],z].
\vspace{3px}
\]
Note that $T_{1,a}$ and $T_{2,a}$ are well defined in $L^2(\mathbb{T})$. Now, to find the action of $T_0$, $T_1$ and $T_2$ on the generalized kernel, we use the expansion $\mathcal T^\lambda_{0,\xi}e^{inz}$ rather than computing tedious fourier series expansions. Moreover,
\[
\mathcal T^\lambda_{0,\xi}(\cos nz)=\mathcal T^\lambda_{0,\xi}\left(\dfrac{e^{inz}+e^{-inz}}{2}\right)
\quad\quad \text{and} \quad\quad
\mathcal T^\lambda_{0,\xi}(\sin nz)=\mathcal T^\lambda_{0,\xi}\left(\dfrac{e^{inz}-e^{-inz}}{2i}\right).
\]
Consequently,
\begin{align*}
T_0(\cos nz)=& -n\lambda \sin nz+\g (n^2-1)\cos nz+\b k^2n^2(m(k)-m(kn))\cos nz, \\
T_1(\cos nz)=& \lambda \cos nz+2n\g \sin nz+\b k^2(-n^2km^\prime(kn)+2n(m(k)\\& -m(kn))\sin nz,\\
T_2(\cos nz)=& -2\g \cos nz-2\b k^2(m(k)-m(kn)-2nkm^\prime(kn)\\& -\dfrac{n^2k^2}{2}m^{\prime\prime}(kn))\cos nz,
\end{align*}
and \begin{align*}
T_0(\sin nz)=& n\lambda \cos nz+\g (n^2-1)\sin nz+\b k^2n^2(m(k)-m(kn))\sin nz, \\
T_1(\sin nz)=& \lambda \sin nz-2n\g \cos nz-\b k^2(-n^2km^\prime(kn)+2n(m(k)\\& -m(kn))\cos nz,\\
T_2(\sin nz)=& -2\g \sin nz-2\b k^2(m(k)-m(kn)-2nkm^\prime(kn)\\& -\dfrac{n^2k^2}{2}m^{\prime\prime}(kn))\sin nz.
\end{align*}
Using these, we get
\begingroup
\allowdisplaybreaks
\begin{align*}\label{eq:in1}
\left<\mathcal T^\lambda_{a,\xi}\phi_1,\phi_1\right>=& \dfrac{1}{2}(i\xi\lambda(1+4a^2A_2^2)-\dfrac{\xi^2}{2}(-2\g(1+a^2A_2^2)+\b k^4(m^{\prime\prime}(k)\\& +16a^2A_2^2m^{\prime\prime}(2k))+4\b k^3(m^\prime(k)+8a^2A_2^2m^{\prime}(2k))))+\mathcal{O}(\xi^3+a^3),\\
\left<\mathcal T^\lambda_{a,\xi}\phi_1,\phi_2\right>=& \dfrac{1}{2}(\lambda(1+8a^2A_2^2)+i\xi(-2\g(1+2a^2A_2^2)\\
&+\b k^3(m^\prime(k)+16a^2A_2^2m^\prime(2k)))+\mathcal{O}(\xi^3+a^3),\\
\left<\mathcal T^\lambda_{a,\xi}\phi_2,\phi_1\right>=& \dfrac{1}{2}(-\lambda(1+8a^2A_2^2)+i\xi(2\g(1+2a^2A_2^2)-4a^2k^2A_2^2-\b k^3(m^\prime(k)+16a^2A_2^2m^\prime(2k)))\\
&+\mathcal{O}(\xi^3+a^3).,\\
\left<\mathcal T^\lambda_{a,\xi}\phi_2,\phi_2\right>=& \dfrac{1}{2}(-2a^2k^2A_2+i\xi\lambda(1+4a^2A_2^2)-\dfrac{\xi^2}{2}(-2\g(1+a^2A_2^2)+4a^2k^2A_2+\b k^4(m^{\prime\prime}(k)\\& +16a^2A_2^2m^{\prime\prime}(2k))+4\b k^3(m^\prime(k)+8a^2A_2^2m^{\prime}(2k))))+\mathcal{O}(\xi^3+a^3).
\end{align*}\endgroup
Using the above obtained expressions, along with the expansion \eqref{e:op}, it follows that the matrix $\mathcal{B}_{a,\xi}^\lambda$ in \eqref{eq:bmat1}
can be expanded for sufficiently small $\xi$ and $|a|$ as
\begingroup
\allowdisplaybreaks
\begin{align*}
\mathcal{B}^\lambda_{a,\xi} = 
&\dfrac{\lambda}{2}\begin{pmatrix}0&
1
\\
-1 &0
\end{pmatrix}+\dfrac{a^2}{2}\begin{pmatrix}0&
8\lambda A_2^2
\\
-8\lambda A_2^2 &-2k^2A_2
\end{pmatrix}+\dfrac{i\xi}{2}\begin{pmatrix}\lambda&
-2\g+\b k^3m^\prime (k)
\\
2\g-\b k^3m^\prime (k) &\lambda
\end{pmatrix}\\&+\dfrac{i\xi a^2}{2}\begin{pmatrix}4\lambda A_2^2&
-4\gamma A_2^2+16\b k^3A_2^2m^\prime (2k)
\\
4\gamma A_2^2-4k^2A_2^2-16\b k^3A_2^2m^\prime (2k) &4\lambda A_2^2
\end{pmatrix}\\&-\dfrac{\xi^2 }{2}(-2\g+\b k^4m^{\prime\prime}(k)+4\b k^3m^\prime(k))\begin{pmatrix}1&
0
\\
0&1
\end{pmatrix}\\&-\dfrac{\xi^2a^2 }{2}\left((-2\g A_2^2+16\b k^4A_2^2m^{\prime\prime}(2k)+32\b k^3A_2^2m^\prime(2k))\begin{pmatrix}1&
0
\\
0&1
\end{pmatrix}+\begin{pmatrix}
    0&0\\0&4k^2A_2
\end{pmatrix}\right)\\
&
+\mathcal{O}(\xi^3+a^3).
\end{align*}
\endgroup

To study the two critical values $\lambda(a,\xi)$ bifurcating from the $(\lambda,a,\xi)=(0,0,0)$ state, we recall from \eqref{eq:cheq1p} that we must
study the roots of the polynomial
\[
\det(\mathcal{B}^\lambda_{a,\xi})=b_0(a,\xi)+ib_1(a,\xi)\lambda+b_2(a,\xi)\lambda^2,
\]
where the coefficient functions $b_j$, defined for $|(a,\xi)|\ll 1$, depend smoothly on $a$ and $\xi$.  From the symmetry property in
Proposition \ref{p:1}(1), we further see that the functions $b_j$ are real-valued for $j=0,1,2$.  Similarly,
Proposition \ref{p:1}(2) implies that $b_0$ and $b_2$ are even functions of $\xi$ while $b_1$ is odd in $\xi$.  Note also that
since the values $\Omega_{1,\xi}$ and $\Omega_{-1,\xi}$ collide at $\lambda=0$ when $\xi=0$, we see that $b_0=\mathcal{O}(\xi^2)$ for 
 $|(a,\xi)|\ll 1$.  It follows that
\[
b_j(a,\xi)=d_j(a,\xi)\xi^{2-j},~~j=0,1,2
\]
where the functions $d_j$ are real-valued functions depending smoothly on $a$ and $\xi$ for $|(a,\xi)|\ll 1$.  Setting $\lambda=i\xi X$ it follows that
\[
\det(\mathcal{B}^\lambda_{a,\xi})=\xi^2\left(d_0(a,\xi)-d_1(a,\xi)X-d_2(a,\xi)X^2\right)=:\xi^2 Q(a,\xi,X).
\]
The underlying wave is thus modulationally unstable if the polynomial $Q$ admits roots with non-zero imaginary parts, while if it modulationally stable
if $Q$ admits two distinct real roots.

To determine the reality of the roots of $Q$, it is sufficient to study its discriminant $\mathcal{D}_{a,\xi}$, which can be directly expanded as
\begin{align*}
\mathcal{D}_{a,\xi}&=    \xi^2(\b k^3(km^{\prime\prime}(k)+2m^\prime(k))+2\g)^2+a^2\dfrac{k^4}{2}\left(\dfrac{2\g+\b k^3(km^{\prime\prime}(k)+2m^\prime(k))}{3\g+4\b k^2(m(k)-m(2k))}\right)\\
   & \quad+\mathcal{O}\left(a^2\left(a^2+\xi^2\right)\right).
\end{align*}
It follows that the asymptotically small background periodic traveling waves $w(\cdot;a,k)$ are modulationally stable provided that $\mathcal{D}_{a,\xi}>0$ for
$0<|\xi|\ll 1$ and modulationally unstable if $\mathcal{D}_{a,\xi}<0$ for $0<|\xi|\ll 1$.
In particular, we note that for a fixed small $|a|$ we can choose $0<\xi_0\ll 1$ sufficiently small such that $\mathcal{D}_{a,\xi}<0$ for all $0<|\xi|\ll\xi_0$, indicating
modulational instability of the background wave, provided that
\begin{equation}\label{e:condition}
\dfrac{2\g+\b k^3(km^{\prime\prime}(k)+2m^\prime(k))}{3\g+4\b k^2(m(k)-m(2k))}<0.
\end{equation}
while one can similarly guarantee $\mathcal{D}_{a,\xi}>0$ for all $0<|\xi|\ll 1$ provided that the expression in \eqref{e:condition} is strictly positive.

To complete the proof of Theorem \ref{t:1}, it remains simply to note by the calculations in Remark \ref{R:Lighthill2}, along with
the observation that
\[
\frac{dc_g}{dk}(k)=\frac{2\gamma}{k^3}+\beta\left(2m'(k)+km''(k)\right),
\] 
that the quantity in \eqref{e:condition} can be rewritten as
\[
\dfrac{2\g+\b k^3(km^{\prime\prime}(k)+2m^\prime(k))}{3\g+4\b k^2(m(k)-m(2k))}=\frac{4}{k^3\left(c_p(k)-c_p(2k)\right)}\frac{dc_g}{dk}(k).
\]
It follows that the sign of the expression in \eqref{e:condition} agrees precisely with that of the modulational instability index $\Delta(k)$ in Theorem \ref{t:1}, completing
the proof.

\section{Application to Specific Models}\label{s:4}

In this section, we apply the general result from Theorem \ref{t:1}  to a number of specific models.  When possible, we compare our results to previously
known results.  Specifically, we first apply our results to the classical Ostrovsky equation \eqref{E:Ost} as well as the fractional Ostrovsky equation.
We then consider the Whitham-Ostrovsky equation, where the associated one-dimensinoal equation encodes the full dispersion relation from the Euler equations
for uni-directional surface water waves, as well as an Ostrovsky variant of the well-studied Intermediate Long-Wave equation.  Our final examples go
further to consider the effects of adding capillary effects into the the classical Ostrovsky and Whitham-Ostrovsky equations.

We note that for each of the examples considered in this section, the corresponding modulational stability of
small amplitude periodic traveling waves in the non-rotational version of the equation given by \eqref{e:whitham2} has been previously studied: see, for example,
the works \cite{HK08,HJ_SurfaceTension,Hur2015ModulationalWaves,Johnson2013StabilityEquations}.  As such, the results
presented below essentially study the effect of rotation (as modeled by the Ostrovsky equation) on these previously studied models.

\subsection{Classical Ostrovsky Equation}\label{ss:1}

As a first application, we apply Theorem \ref{t:1} to the classical Ostrovsky equation \eqref{E:Ost}.  Note that \eqref{E:Ost} corresponds to our generalized-dispersion
model \eqref{eq:gost} with the choice
\begin{equation}\label{e:mos}
\mathcal{M} = 1+\partial_x^2,~~{\rm i.e.}~~m(k)=1-k^2.
\end{equation}
The symbol $m(k)$ clearly satisfies Hypotheses~\ref{h:m}~(H1), (H2) ($\alpha=2$, $C_1=1$ and $C_2=2$), and (H3) ($m$ is strictly decreasing for $k>0$). 
Consequently, we obtain asymptotically small periodic traveling wave solutions from Theorem \ref{T:sol} along with asymptotic expansions provided
explicitly by substituting $m(k)=1-k^2$ into \eqref{E:w_ansatz}.

In this case, the corresponding phase and group velocities are given explicitly by
\[
c_p(k) = \beta\left(1-k^2\right)+\frac{\gamma}{k^2}~~{\rm and}~~c_g(k) = \beta\left(1-3k^2\right)-\frac{\gamma}{k^2}.
\]
The qualitative properties clearly depend on the sign of $\beta$.  For $\beta>0$, the group velocity $c_g$  attains a global maxima (for $k>0$) at  $k=k_c=\left(\dfrac{\g}{3|\b|}\right)^{1/4}$
and is monotonically increasing for $k\in(0,k_c)$ and monotonically decreasing for $k>k_c$.  Further, in this case the phase speed $c_p(k)$ is strictly decreasing for $k>0$,
and hence in this case one has
\[
\Delta(k)>0~~{\rm for}~~k\in(0,k_c)~~~{\rm and}~~~\Delta(k)<0~~{\rm for}~~k>k_c.
\]
This establishes the modulational instability and stability result in Corollary \ref{c:1} in the case $\beta>0$.  As noted in the Introduction, 
in the case $\beta=1$ and $\gamma=1$ our result agrees  with that derived by Whitfield and Johnson in \cite{Whitfield2014Rotation-inducedWaves}.

Similarly, when $\beta<0$ the group velocity is strictly increasing for all $k>0$, while $c_p(k)-c_p(2k)$ changes signs exactly once, from positive to negative, at $k_c=\left(\dfrac{\g}{4|\b|}\right)^{1/4}$.
This establishes Corollary \ref{c:1} in the case $\beta<0$. Notice, in particular, that the mechanisms accounting for the modulational instabilities in the case $\beta>0$ are different
from those in the $\beta<0$ case.

\subsection{Fractional Ostrovsky Equation}

The Ostrovsky-fractional KdV equation 
\begin{equation}\label{eq:OfKdV}
\left( u_t+\beta(1-|\partial_x|^\delta) u_x+(u^2)_x\right)_x-\gamma u=0,~ \gamma>0, ~\beta \in \R\setminus\{0\}
\end{equation}
can be obtained from \eqref{eq:gost} by choosing $m(k)=1-|k|^\delta$, $\delta>1/2$.  In this case, the symbol $m(k)$ clearly satisfies Hypotheses~\ref{h:m}~(H1), (H2) ($\delta=\alpha$, $C_1=1$, and $C_2=2$), and (H3) ($m$ is strictly decreasing for $k>0$). As above, we can obtain  asymptotically small amplitude  periodic travleing wave solutions of the Ostrovsky-fKdV equation from 
Theorem \ref{T:sol} by substituting $m(k)=1-|k|^\delta$.  Applying precisely the same reasoning as in the previous section for the classical Ostrovsky equation, we obtain the following
result.

 \begin{corollary}
    For a fixed $\g>0$, a $2\pi/k$-periodic traveling wave of the Ostrovsky-fKdV equation \eqref{eq:OfKdV} with sufficiently small amplitude is modulationally unstable if $k>k_c$, where
   \begin{equation*}
\begin{cases}
k_c = \left(\dfrac{2\g}{\delta(1+\delta)|\b|}\right)^{1/(2+\delta)} & \quad \text{ if } \b >0, \\
k_c = \left(\dfrac{3\g}{4(2^\delta-1)|\b|}\right)^{1/(2+\delta)} & \quad \text{ if } \b <0. 
\end{cases}
\end{equation*}
and it is modulationally stable otherwise.
 \end{corollary}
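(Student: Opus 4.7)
The plan is to apply the modulational instability index from Theorem~\ref{t:1} directly with the symbol $m(k) = 1 - |k|^\delta$, following exactly the template laid out for the classical Ostrovsky equation in Section~\ref{ss:1}. The hypotheses (H1)--(H3) have already been verified in the paragraph preceding the corollary, and Theorem~\ref{T:sol} then guarantees the existence of the small-amplitude family. What remains is purely a computation of the sign of
\[
\Delta(k) = \bigl(c_p(k) - c_p(2k)\bigr)\,\frac{dc_g(k)}{dk}
\]
with
\[
c_p(k) = \beta(1 - k^\delta) + \frac{\gamma}{k^2}, \qquad c_g(k) = \beta\bigl(1 - (1+\delta)k^\delta\bigr) - \frac{\gamma}{k^2}.
\]

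First I would compute the two factors separately. A direct differentiation gives
\[
\frac{dc_g}{dk}(k) = \frac{2\gamma}{k^3} - \beta\,\delta(1+\delta)\,k^{\delta-1},
\]
and elementary algebra yields
\[
c_p(k) - c_p(2k) = \beta(2^\delta - 1)k^\delta + \frac{3\gamma}{4k^2}.
\]
Solving $dc_g/dk = 0$ gives a unique positive root precisely when $\beta > 0$, namely $k = (2\gamma/(\delta(1+\delta)|\beta|))^{1/(2+\delta)}$, matching the first formula for $k_c$. Similarly, $c_p(k)-c_p(2k) = 0$ has a unique positive root precisely when $\beta < 0$, namely $k = (3\gamma/(4(2^\delta-1)|\beta|))^{1/(2+\delta)}$, matching the second formula.

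Next I would assemble the sign analysis for each sign of $\beta$. When $\beta > 0$, the factor $c_p(k)-c_p(2k)$ is a sum of two positive terms and hence is strictly positive on $(0,\infty)$, so $\Delta(k)$ inherits the sign of $dc_g/dk$: writing $c_g'(k) = k^{\delta-1}\bigl(2\gamma/k^{\delta+2} - \beta\delta(1+\delta)\bigr)$ shows this is positive for $k < k_c$ and negative for $k > k_c$. When $\beta < 0$, the derivative $dc_g/dk$ is a sum of two strictly positive terms, hence strictly positive on $(0,\infty)$, so $\Delta(k)$ inherits the sign of $c_p(k) - c_p(2k)$, which passes from positive to negative at the corresponding $k_c$. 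In either case $\Delta(k) < 0$ if and only if $k > k_c$, and Theorem~\ref{t:1} delivers the claimed dichotomy.

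Honestly there is no real obstacle here; the corollary is a direct application of Theorem~\ref{t:1} modulo verifying that exactly one of the two factors of $\Delta(k)$ does the sign-changing work in each case. The only point worth flagging in the write-up is that, as for the classical Ostrovsky equation, the mechanism for the onset of instability differs according to the sign of $\beta$: for $\beta > 0$ it is a critical point of the group velocity, whereas for $\beta < 0$ it is the coincidence of the phase velocities of the first and second harmonics. No additional estimate is needed since $m(k)$ and its relevant combinations are elementary and monotone on $(0,\infty)$.
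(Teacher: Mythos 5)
Your proposal is correct and is essentially the paper's own argument: the paper's proof of this corollary simply says it is the same as for the classical Ostrovsky equation in Section~\ref{ss:1}, i.e.\ substitute $m(k)=1-|k|^\delta$ into Theorem~\ref{t:1}, observe that for $\beta>0$ the factor $c_p(k)-c_p(2k)$ is strictly positive while $dc_g/dk$ changes sign once at the stated $k_c$, and that for $\beta<0$ the roles reverse. Your explicit computations of $c_g'(k)=2\gamma/k^3-\beta\delta(1+\delta)k^{\delta-1}$ and $c_p(k)-c_p(2k)=\beta(2^\delta-1)k^\delta+3\gamma/(4k^2)$, and the resulting sign dichotomy, check out exactly.
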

 \begin{proof}
    The proof is same as the Ostrovsky equation in Section~\ref{ss:1}
\end{proof}

\subsection{Whitham-Ostrovsky Equation}

Continuing as above, we may consider an Ostrovsky variant of the well-studied Whitham equation \eqref{e:whitham} by choosing
 \begin{equation}\label{e:mwh}
 m(k)=\sqrt{\frac{\tanh k}{k}}
 \end{equation} 
The symbol $m(k)$ clearly satisfies Hypotheses~\ref{h:m}~(H1), (H2) ($\alpha=-1/2$, $C_1=1$ and $C_2=2$), and (H3) ($m$ is strictly decreasing for $k>0$). 
As above, we can obtain  asymptotically small amplitude  periodic travleing wave solutions of the Ostrovsky-fKdV equation from 
Theorem \ref{T:sol}.  In this case, our Theorem \ref{t:1} gives the following result.

 \begin{corollary}\label{C:WO}
    For a fixed $\g>0$, a $2\pi/k$-periodic traveling wave of the Whitham-Ostrovsky equation \eqref{E:Ost} with sufficiently 
    small amplitude is modulationally unstable if $k>k_c$, where $k_c$ is the unique real solution of following equations
   \begin{equation*}
\begin{cases}
k^3\left(k\dfrac{d^2}{dk^2}\left(\sqrt{\frac{\tanh k}{k}}\right)+2\dfrac{d}{dk}\left(\sqrt{\frac{\tanh k}{k}}\right)\right)=-\dfrac{2\g}{|\b|}
 & \quad \text{ if } \b > 0, \\~\\
k^2\left(\sqrt{\frac{\tanh k}{k}}-\sqrt{\frac{\tanh 2k}{2k}}\right)=\dfrac{3\g}{4|\b|} & \quad \text{ if } \b < 0 
\end{cases}
\end{equation*}
and it is modulationally stable otherwise.
 \end{corollary}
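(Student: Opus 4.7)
The plan is to apply Theorem~\ref{t:1} directly with $m(k) = \sqrt{\tanh(k)/k}$, following the same strategy as in Section~\ref{ss:1} for the classical Ostrovsky equation. First I would confirm that Hypothesis~\ref{h:m} is satisfied (this is essentially already noted in the excerpt: $m$ is real, even, $m(0) = 1$, satisfies the growth bound with $\alpha = -1/2$, and is strictly decreasing on $(0, \infty)$, which in particular rules out the resonance condition in (H3)). Hence Theorem~\ref{T:sol} supplies the requisite family of small-amplitude $2\pi/k$-periodic traveling waves, and Theorem~\ref{t:1} reduces the question to analyzing the sign of
\begin{equation*}
\Delta(k) = \left(c_p(k) - c_p(2k)\right) \frac{d c_g(k)}{dk}
= \left(\beta\left(m(k)-m(2k)\right) + \frac{3\gamma}{4k^2}\right)\left(\beta\left(2m'(k)+km''(k)\right) + \frac{2\gamma}{k^3}\right),
\end{equation*}
where the factorization uses the explicit formulas \eqref{e:gvel}.

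For $\beta > 0$, since $m$ is strictly decreasing the first factor is a sum of two strictly positive terms and hence is strictly positive for every $k > 0$. Thus $\mathrm{sgn}\,\Delta(k) = \mathrm{sgn}\,\tfrac{dc_g}{dk}$. I would then verify that $2m'(k) + km''(k) < 0$ for all $k > 0$ for the Whitham symbol, which reduces the vanishing of the second factor to $\beta k^3|2m'(k)+km''(k)| = 2\gamma$; this is precisely the first equation in the corollary, and it identifies the crossing $k_c$ beyond which $\Delta$ becomes negative.

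For $\beta < 0$, the second factor is strictly positive for all $k > 0$, because $\beta(2m'(k) + km''(k))$ is a product of two negative quantities and $2\gamma/k^3 > 0$. Hence $\mathrm{sgn}\,\Delta(k) = \mathrm{sgn}\,(c_p(k) - c_p(2k))$, and the negative contribution $\beta(m(k)-m(2k))$ competes with the positive $3\gamma/(4k^2)$. As $k \to 0^+$ the second term dominates (using $m(k)-m(2k) = O(k^2)$), while as $k \to \infty$ the first dominates because $m(k) - m(2k) \sim (1 - 2^{-1/2}) k^{-1/2}$ decays more slowly than $k^{-2}$. This yields a sign change of $c_p(k) - c_p(2k)$ at a $k_c$ satisfying the second equation in the corollary.

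The hard part will be establishing \emph{uniqueness} of $k_c$ in each case. Because $m(k) = \sqrt{\tanh(k)/k}$ has no convenient closed-form derivative structure, proving that the map $k \mapsto k^3 |2m'(k) + km''(k)|$ (for $\beta > 0$) and the map $k \mapsto k^2(m(k) - m(2k))$ (for $\beta < 0$) are strictly monotone from $0$ to $+\infty$ will require either careful analytic estimates based on the power series of $\tanh$ near $0$ and its exponential relaxation to $1$ at $\infty$, or a numerical verification supplementing the asymptotic control sketched above. Once monotonicity is in hand, uniqueness of $k_c$ follows immediately from the intermediate value theorem, completing the identification of the critical wavenumber in both sign regimes of $\beta$.
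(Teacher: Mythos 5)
Your proposal follows essentially the same route as the paper, which simply invokes Theorem~\ref{t:1} with $m(k)=\sqrt{\tanh k/k}$ and repeats the sign analysis of the two factors $c_p(k)-c_p(2k)$ and $dc_g/dk$ from Section~\ref{ss:1}: for $\beta>0$ the first factor is positive since $m$ is decreasing and the sign change comes from $dc_g/dk=0$ (equivalently $k^3|km''+2m'|=2\gamma/|\beta|$, using that $2m'+km''=\frac{d^2}{dk^2}\sqrt{k\tanh k}<0$), while for $\beta<0$ the second factor is positive and the sign change comes from $k^2(m(k)-m(2k))=3\gamma/(4|\beta|)$. Your explicit flagging of the uniqueness/monotonicity verification is a fair point, but it does not constitute a different method --- the paper likewise asserts uniqueness without a detailed monotonicity proof (noting only, as for the ILW case, that $k_c$ lacks a closed form and can be approximated numerically).
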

 \begin{proof}
    
  The proof is same as the Ostrovsky equation in Section~\ref{ss:1}
    \end{proof}

We note that numerics indicate that the functions
\begin{equation}\label{e:Whitham_Ost1}
k\mapsto -k^3\left(k\dfrac{d^2}{dk^2}\left(\sqrt{\frac{\tanh k}{k}}\right)+2\dfrac{d}{dk}\left(\sqrt{\frac{\tanh k}{k}}\right)\right)
\end{equation}
and
\begin{equation}\label{e:Whitham_Ost2}
k\mapsto k^2\left(\sqrt{\frac{\tanh k}{k}}-\sqrt{\frac{\tanh 2k}{2k}}\right)
\end{equation}
are both equal to zero at $k=0$ and are both monotonically increasing for $k>0$, tending to infinity as $k\to\infty$: see Figure \ref{F:Whitham_Ost}.  
Consequently, it is clear in each case $\beta>0$ and $\beta<0$ there is a unique $k=k_c(\beta)>0$ where the conditions in Corollary \ref{C:WO} are satisfied.
Further, they are both $\mathcal{O}(k^4)$ for $|k|\ll 1$
and hence, for the Whitham-Ostrovsky equation, the critical frequency satisfies $k_c\sim\gamma^{1/4}$ for $0<\gamma\ll 1$.  Note that for the one-dimensional 
Whitham equation for water waves, corresponding to \eqref{e:whitham2} with the choice \eqref{e:mwh}, the corresponding asymptotically small waves
exhibit a modulational instability for $k>\widetilde{k}_c\approx 1.146$.  Again, this demonstrates a singularity of the Whitham-Ostrovsky equation
in the limit $\gamma\to 0^+$: see Remark \ref{r:singular_limit1} in the Introduction.

\begin{figure}[ht]
(a) \includegraphics[scale=0.5]{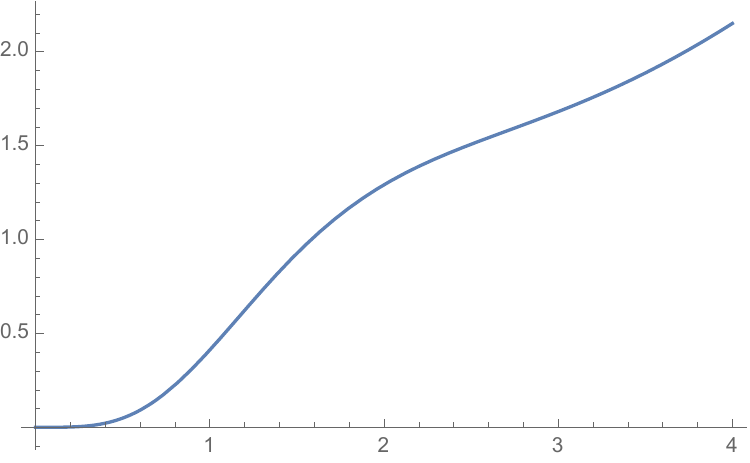}\qquad (b) \includegraphics[scale=0.5]{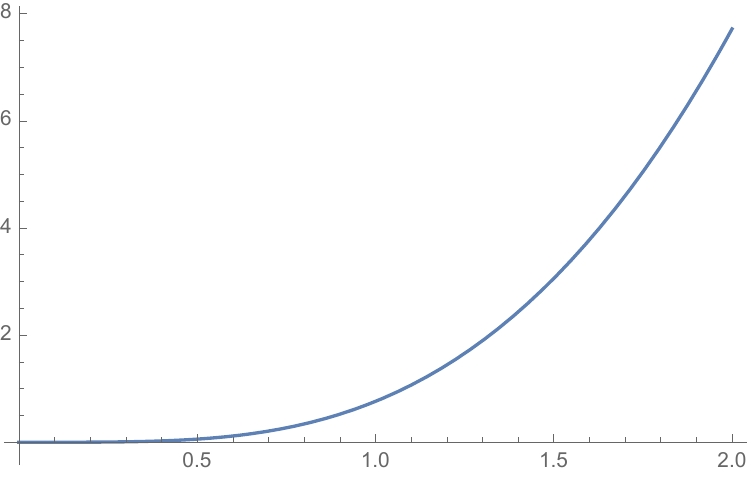}
\caption{Plots of the functions (a) \eqref{e:Whitham_Ost1} and (b) \eqref{e:Whitham_Ost2} associated with the Whitham-Ostrovsky equation.
}\label{F:Whitham_Ost}
\end{figure}

\subsection{ILW-Ostrovsky Equation}

The Intermediate  Long Wave (ILW) equation is given by
\[
u_t+\beta \mathcal{M}u_x+\left(u^2\right)_x=0
\]
where here $\mathcal{M}$ is a Fourier multiplier with symbol $m(k)=k\coth(k)$.	The ILW is well-known to describe long internal gravity waves in a two-layer stratified fluid, with the lower layer
having a large finite depth.  By adding rotational effects, we can obtain an ILW-Ostrovsky equation 
by making the choice $m(k)=k\coth(k)$ in \eqref{eq:gost}.  
The symbol $m(k)$ clearly satisfies Hypotheses~\ref{h:m}~(H1), (H2) ($\alpha=2$, $C_1=1$ and $C_2=2$), and (H3) ($m$ is strictly increasing for $k>0$). 
As above, we can obtain  asymptotically small amplitude  periodic travleing wave solutions of the ILW-Ostrovsky equation from 
Theorem \ref{T:sol}.
Again applying the same reasoning as in Section \ref{ss:1}, we obtain the following result.

 \begin{corollary}\label{C:ILW_OST}
    For a fixed $\g>0$, a $2\pi/k$-periodic traveling wave of the ILW-Ostrovsky equation \eqref{E:Ost} with sufficiently small amplitude is modulationally unstable if $k>k_c$, where $k_c$ is the unique real solution of following equations
   \begin{equation*}
\begin{cases}
k^2(2k \coth{2k}-k \coth{k})-\dfrac{3\g}{4|\b|}=0 & \quad \text{ if } \b > 0, \\~\\
k^3(-4k\csch^2{k}+2k^2\coth{k}\csch^2{k}+2\coth{k})-\dfrac{2\g}{|\b|}=0 & \quad \text{ if } \b < 0 
\end{cases}
\end{equation*}
and it is modulationally stable otherwise.
 \end{corollary}
 \begin{proof}
   The proof is same as the classical Ostrovsky equation in Section~\ref{ss:1}.  Note that while we lack an explicit formula for the critical frequency $k_c$,
   it can of course be numerically approximated.
    \end{proof}

We note that, similar to the above example, numerics indicate that  the functions
\begin{equation}\label{e:ILW_OST1}
k\mapsto k^2(2k \coth{2k}-k \coth{k})
\end{equation}
and
\begin{equation}\label{e:ILW_OST2}
k\mapsto k^3(-4k\csch^2{k}+2k^2\coth{k}\csch^2{k}+2\coth{k})
\end{equation}
both vanish at $k=0$ and are monotonically increasing for $k>0$, tending to infinity as $k\to\infty$: see Figure \ref{F:ILW_OST}.  As above, it follows
that for each case $\beta>0$ and $\beta<0$ there is a unique $k=k_c(\beta)>0$ where the conditions in Corollary \ref{C:ILW_OST}  are satisfied. 
Further, both functions are $\mathcal{O}(k^4)$ for $|k|\ll 1$.  
As such, the for the ILW-Ostrovsky equation we again see the scaling relation $k_c\sim\gamma^{1/4}$ for $0<\gamma\ll 1$
as observed for both the classical Ostrovsky and Whitham-Ostrovsky equations considered above.

\begin{figure}[ht]
(a) \includegraphics[scale=0.5]{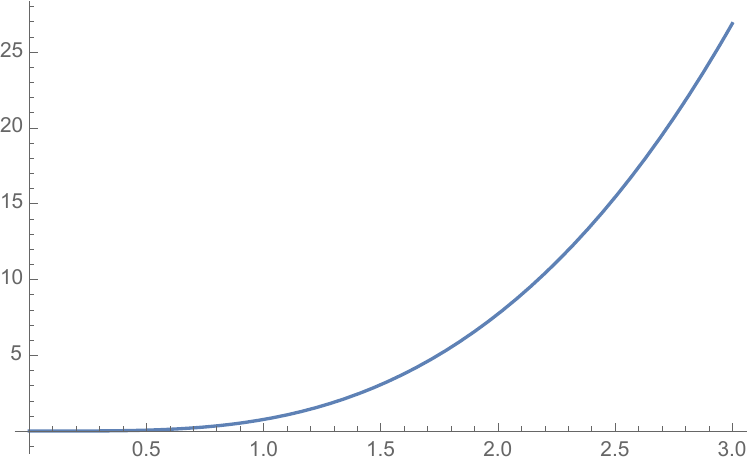}\qquad (b) \includegraphics[scale=0.5]{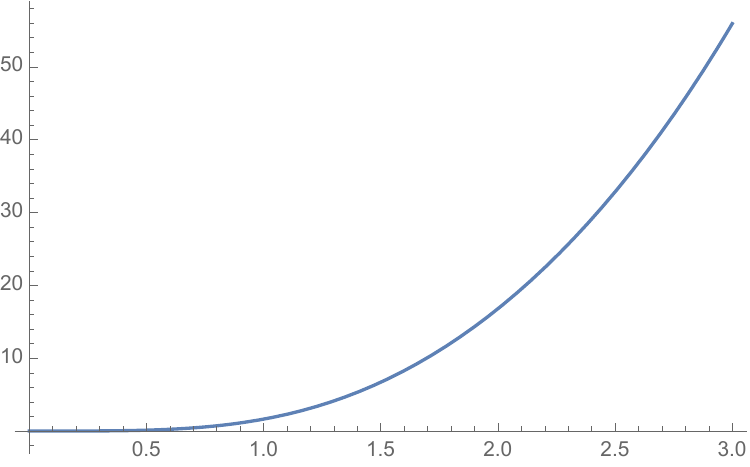}
\caption{Plots of the functions (a) \eqref{e:ILW_OST1} and (b) \eqref{e:ILW_OST2} associated with the ILW-Ostrovsky equation.
}\label{F:ILW_OST}
\end{figure}

\subsection{Effects of Surface Tension on Modulational instability}

We conclude our study by further adding capillary effects to the classical Ostrovsky and Whtiham-Ostrovsky equations considered above.  Note that it is known
that incorporating such capillary effects into even the non-rotational models drastically affects the modulational instability of asymptotically small waves:
see, for example, \cite{HJ_SurfaceTension}.  As such, we find it worthwhile to report on the effects adding surface tension to the rotational models considered
in our work.

\subsubsection{Classical Ostrovsky Equation with Surface Tension} 

Upon normalization, in the presence of surface tension in the classical Ostrovsky equation, the dispersive symbol in \eqref{e:M} will change to
\begin{equation}\label{e:mos1}
m(k,T)=1-k^2+3Tk^2
\end{equation} 
where here $T\geq0$ is the constant coefficient of surface tension. Note that  for $T=0$, \eqref{e:mos1} reduces to \eqref{e:mos}, which was studied 
in Section \ref{ss:1} above.
 
Using Theorem~\ref{t:1}, we have following results on the effects of surface tension on modulational stability in the Ostrovsky equation.  As one might expect,
the results depend on the sign of $\beta$ as well as whether the parameter $T$ satisfies $0<T<1/3$ or $T>1/3$.

\begin{corollary}\label{cor:OT}
In the case $\b>0$, a sufficiently small $2\pi/k$-periodic traveling wave of the full-dispersion Ostrovsky equation  \eqref{e:whitham2} with dispersive symbol
$m(k)=m(k;T)$ as in \eqref{e:mos1} is modulationally unstable 
if $k>k_c(T)$, where
\begin{equation*}
\begin{cases}
k_c(T)=\left(\dfrac{\g}{3|\b| (1-3T)}\right)^{1/4} & \quad \text{ if } T<1/3, \\
k_c(T)=\left(\dfrac{\g}{4|\b| (3T-1)}\right)^{1/4} & \quad \text{ if } T>1/3,
\end{cases}
\end{equation*}
and it is modulationally stable otherwise.\\

For the same equation, but in the case $\b<0$, a sufficiently small $2\pi/k$-periodic traveling wave is modulationally unstable if $k>k_c(T)$, 
where
\begin{equation*}
\begin{cases}
k_c(T)=\left(\dfrac{\g}{4|\b| (1-3T)}\right)^{1/4} & \quad \text{ if } T<1/3, \\
k_c(T)=\left(\dfrac{\g}{3|\b| (3T-1)}\right)^{1/4} & \quad \text{ if } T>1/3,
\end{cases}
\end{equation*}
and it is modulationally stable otherwise.
\end{corollary}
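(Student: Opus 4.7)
The proof is a direct application of Theorem \ref{t:1} specialized to the choice $m(k,T) = 1-(1-3T)k^2$, and my plan is to follow exactly the roadmap used in Section \ref{ss:1} for the standard Ostrovsky equation, with the factor $(1-3T)$ replacing the $1$ coming from $-\partial_x^2$. First, I would verify (for $T\neq 1/3$) that $m(\cdot,T)$ satisfies Hypothesis \ref{h:m}: it is real and even with $m(0,T)=1$; it is strictly monotone on $k>0$, so (H3) holds; and the quadratic growth gives (H2) with $\alpha=2$ just as in Section \ref{ss:1}.

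Next, I would compute the two factors of the modulational index from \eqref{e:gvel}, obtaining
\begin{align*}
c_p(k)-c_p(2k) &= 3\b(1-3T)k^2 + \frac{3\g}{4k^2},\\
\frac{dc_g(k)}{dk} &= -6\b(1-3T)k + \frac{2\g}{k^3}.
\end{align*}
Each of these is a sum of two monomials whose signs are determined by $\b$ and $(1-3T)$. A short case analysis on the four sign combinations of $\b$ and $(1-3T)$ then shows that in every case one of the two factors has constant positive sign on $k>0$, while the other vanishes at a unique critical wavenumber $k_c(T)$ and changes sign there from $+$ to $-$. Solving the resulting quartic $k^4 = \g/(c_1|\b||1-3T|)$ with $c_1=3$ (when the group-velocity factor vanishes) or $c_1=4$ (when the phase-velocity difference vanishes) immediately yields the four formulas in the statement of Corollary \ref{cor:OT}.

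This is essentially a bookkeeping calculation, so I do not foresee any conceptual obstacle. The only small point of care is matching which of the two factors of $\Delta(k)$ vanishes in each of the four cases---for $\b(1-3T)>0$ the group-velocity derivative is the one that changes sign, while for $\b(1-3T)<0$ it is the phase-velocity difference that changes sign---and confirming in each case that the sign of $\Delta(k)$ flips from positive to negative (rather than negative to positive) as $k$ crosses $k_c(T)$, so that Theorem \ref{t:1} delivers instability for $k>k_c(T)$ and stability for $k<k_c(T)$.
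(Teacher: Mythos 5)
Your proposal is correct and follows exactly the route the paper intends: the paper gives no separate proof of Corollary~\ref{cor:OT} beyond invoking Theorem~\ref{t:1} with $m(k,T)=1-(1-3T)k^2$ and repeating the sign analysis of Section~\ref{ss:1}, and your computed factors $c_p(k)-c_p(2k)=3\b(1-3T)k^2+\tfrac{3\g}{4k^2}$ and $\tfrac{dc_g}{dk}=-6\b(1-3T)k+\tfrac{2\g}{k^3}$, together with the four-way case analysis on the sign of $\b(1-3T)$, reproduce all four formulas for $k_c(T)$ with the correct $+$ to $-$ sign change of $\Delta(k)$. You also rightly exclude $T=1/3$, where $m\equiv 1$ makes (H3) fail and the paper itself declares the result inconclusive.
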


For $T=0$, that is, when capillary effects are absent, value of $k_c(0)$ of course agrees with $k_c$ in Corollary~\ref{c:1}. For $T > 0$, we describe the modulational instability through
the Figure~\ref{fig:lem4.3}, where we took $\frac{\gamma}{\beta}=1$ and $\frac{\gamma}{\beta}=-1$ for the computations\footnote{Note that 
while the results for the cases 
$\frac{\gamma}{\beta}=\pm 1$ are qualitatively similar, the exact stability boundary curves in each figure are slightly different.}.
 In $k$-$k\sqrt{T}$ plane,  two curves are corresponding to each mechanism splitting the plane into two regions of stability and two regions of instability. Any fixed $T>0$
corresponds to a line passing through the origin of slope $\sqrt{T}$. For $T\neq1/3$, the line through the origin only crosses one curve at a time producing one interval of stable wave numbers and one  interval of unstable wave numbers. The result becomes inconclusive for $T=1/3$.
\begin{figure}[ht]
    \centering
    {(a)\includegraphics[width=8cm]{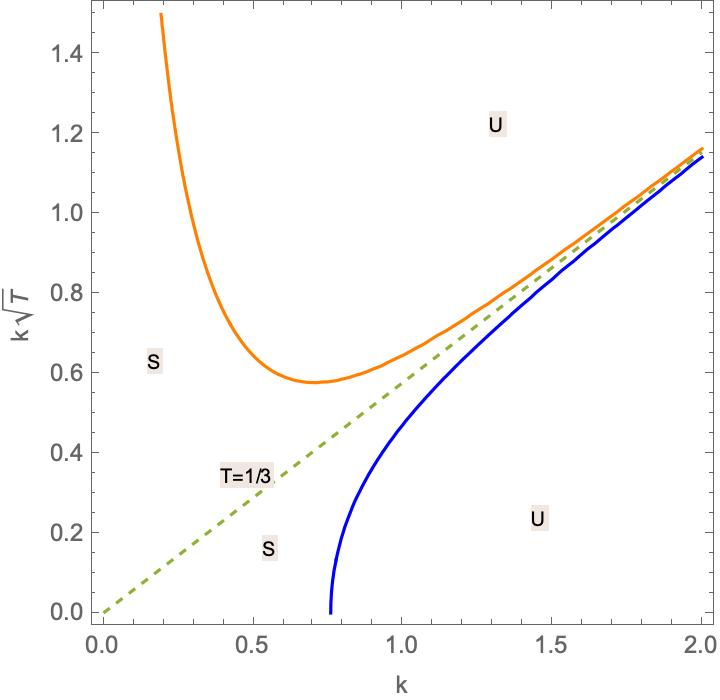}~~(b)\includegraphics[width=8cm]{3_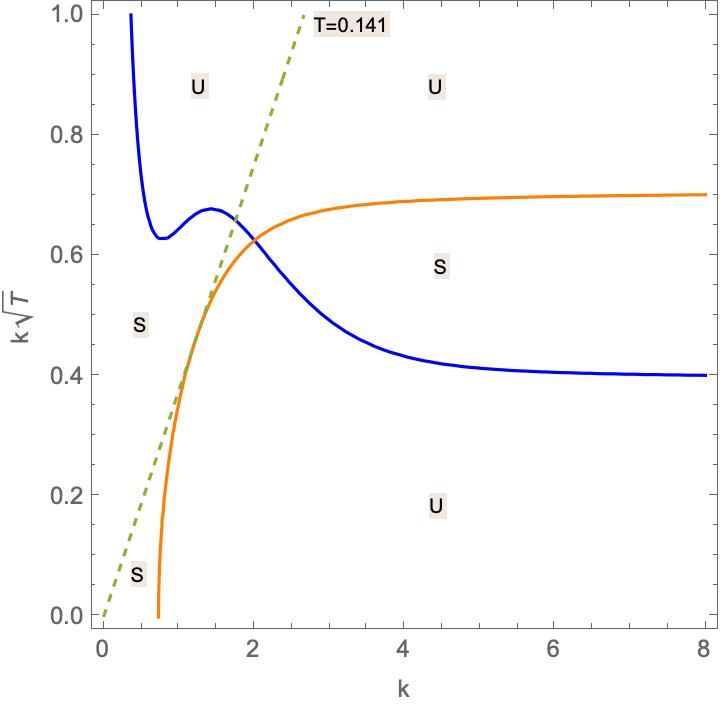} }%
    
    \caption{Stability diagrams for sufficiently small, periodic wave trains of Ostrovsky equation with surface tension (see Corollary \ref{cor:OT}).  
    Here, for the computations we took (a) $\frac{\gamma}{\beta}=1$ and (b) $\frac{\gamma}{\beta}=-1$.  In both figures,  ``S" and ``U" denote stable and unstable regions. Solid curves represent roots of the stability condition \eqref{e:cd}.  
Specifically,  the  solid orange curve represents the coots of $c_p(k)-c_p(2k)$ while the solid blue curve represents the roots of $ \frac{dc_g(k)}{dk}$.  (Color available online.)}
    \label{fig:lem4.3}
\end{figure}

\subsubsection{Whitham-Ostrovsky Equation with Surface Tension} To incorporate the effects of surface tension into the Whitham-Ostrovsky equation we replace  the 
dispersive symbol $m(k)$ in \eqref{e:mwh} by
 \begin{equation}\label{e:mwh1}
 m(k,T)=\sqrt{\dfrac{\tanh{k}}{k}(1+Tk^2)},
 \end{equation}
where $T\geq0$ is the (properly normalized) coefficient of surface tension. Note that when $T=0$, \eqref{e:mwh1} of course reduces to \eqref{e:mwh}, and hence we will
only consider the case $T>0$ here.  

First, note that for each fixed $T>0$ the symbol $m(\cdot,T)$ satisfies 

\[
m(0,T)=1\quad{\rm and}\quad \lim_{|k|\to\infty}m(k,T)=\infty.
\]
Further, it is readily seen that $m(\cdot,T)$ is strictly monotone for $k>0$ provided $T>1/3$, while for $0<T<1/3$ there exists a unique $k^*(T)$ such that $m(\cdot,T)$ 
is monotone decreasing on $(0,k^*(T))$ and monotone increasing for $(k^*(T),\infty)$.  In this way, the critical surface tension $T=1/3$ is often used to differentiate between
the ``weak" ($0<T<1/3$) and  ``strong" ($T>1/3$) surface tension cases.

According to Theorem~\ref{t:1}, whenever factors $c_p(k)-c_p(2k)$ and $\frac{dc_g(k)}{dk}$ of the modulational instability index $\Delta(k)$ are zero, stability changes, where $c_k(p)$ and $c_g(k)$ are defined in \eqref{e:gvel}. To show the explicit dependence on surface tension, we replace $c_p(k)$ and $c_g(k)$, by $c_p(k,T)$ and $c_g(k, T)$, respectively. 
Notice in particular that the roots (in the frequency $k$) of the factors $c_p(k,T)-c_p(2k,T)$ and $\frac{dc_g(k,T)}{dk}$ are precisely the same for each fixed value of the ratio 
\[
\sigma:=\frac{\g}{\beta}.
\]
Of course, since $\g$ is always positive the sign of $\sigma$ agrees with that of $\beta$. Below, we will consider the cases $\sigma>0$ and $\sigma<0$  separately.  

\ 

\noindent
\underline{\bf Case $\sigma>0$:} \\

It is  straightforward to see that for each $\sigma>0$, there exists a unique $T=T_c(\sigma)$ such that the group velocity $c_g$ attains a local maxima and a local minima for $0<T<T_c(\sigma)$ and is monotonic for $T>T_c(\sigma)$. For instance, for $\sigma=0.1$, $T_c \approx 0.132$ and therefore, plots of $c_g(k,T)$ vs. $k$ for $T=0.02 < T_c$ and $T=0.5>T_c$ (see Figure~\ref{fig:np1}) confirm the monotonocity property of $c_g$. Moreover, for any $T>0$, $c_p(k,T)-c_p(2k,T)$ is changing its sign only once, see Figure~\ref{f:cp}, for instance. 
\begin{figure}[ht]
    \centering
    
    \subfloat[\centering $T=0.02$]{{\includegraphics[width=7cm]{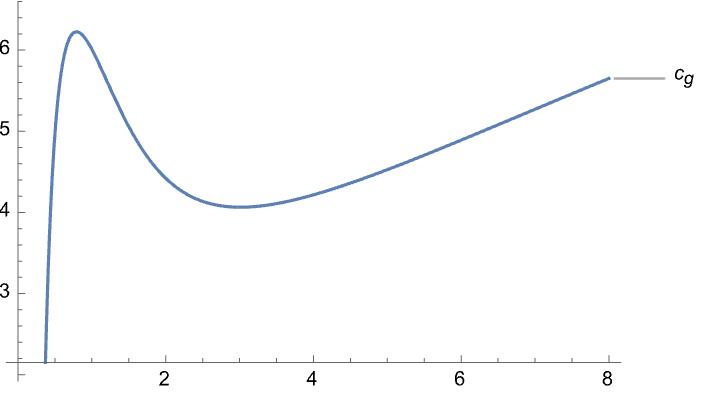} }}
    \qquad
    \subfloat[\centering $T=0.5$]{{\includegraphics[width=7cm]{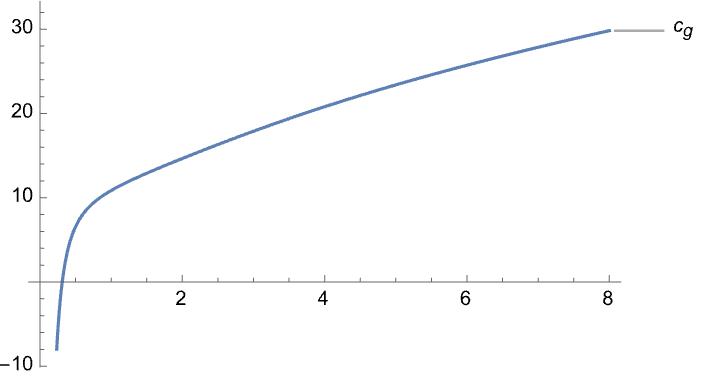} }}
    \caption{Graph of function $c_g$ vs. $k$ for $\sigma=0.1$ for which $T_c\approx 0.132$.}\label{fig:np1}%
    \end{figure}
\begin{figure*}[ht]
    \centering
    \includegraphics[width=0.49\textwidth]{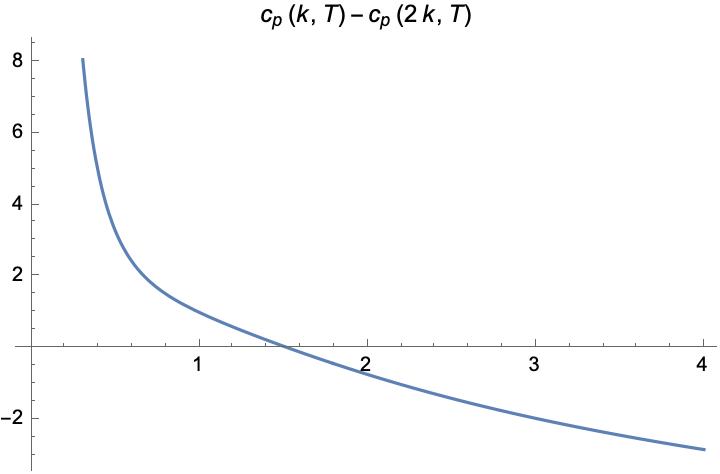}
    \caption{$c_p(k,T)-c_p(2k,T)$ vs $k$ for $\sigma=0.1$ and 
    $T=0.2$.}\label{f:cp}
\end{figure*}
Therefore, for $0<T<T_c(\sigma)$, there exist three critical wavenumbers $0<k_1<k_2<k_3$ such that sufficiently small $2\pi/k$-periodic traveling wave is modulationally
stable for $k\in(0,k_1)\cup (k_2,k_3)$ and modulationally
unstable for $k\in(k_1,k_2)\cup (k_3,\infty)$. 
On the other hand, for $T>T_c(\sigma)$ there is only one critical wave number $k_c>0$ such that sufficiently small
$2\pi/k$-periodic traveling wave is modulationally
stable for $0<k<k_c(T)$ and modulationally unstable for $k > k_c (T )$.
The graph of $T_c$  vs. $\sigma$ is shown in Figure \ref{fig:lem4.9}.

In \cite{Kaw} and \cite{DR}, the effects of surface tension on modulational instability in the full water wave problem has been shown in $k-k\sqrt{T}$ plane. We produce a similar plot for $\sigma  = 0.1$ in Figure~\ref{fig:2}. In $k$-$k\sqrt{T}$ plane, two curves are corresponding to each mechanism splitting the plane into three regions of stability and three regions of instability. For $\sigma=0.1$, this can be easily seen that $T_c\approx0.132$ such that for $0<T<0.132$, the line crosses both the curves producing two intervals of stable wave numbers and two  intervals of unstable wave numbers. On the other hand, for $T>0.132$, the line through the origin crosses only one curve producing one interval of stable wave numbers and one interval of unstable wave numbers. 
\begin{figure}[ht]
    \centering
    {\includegraphics[width=10cm]{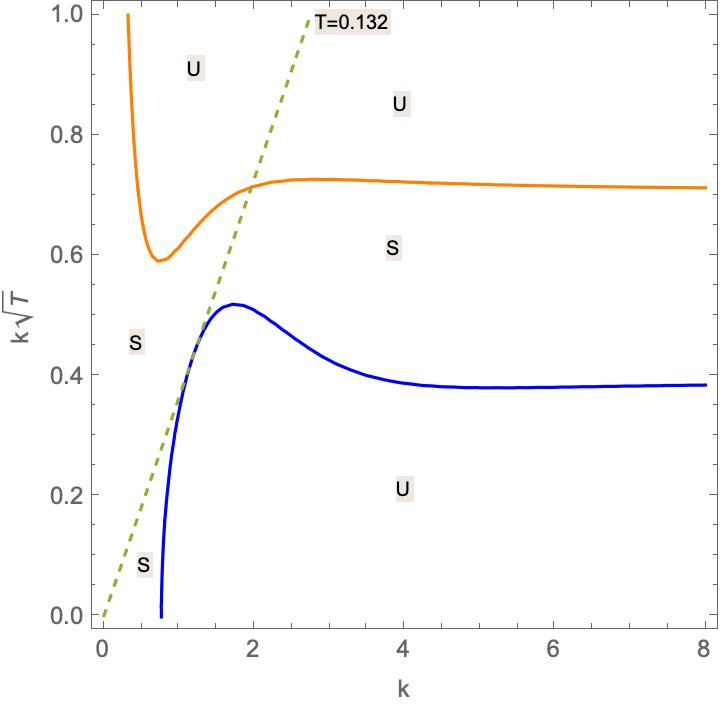} }%
    
    \caption{Stability diagram for sufficiently small, periodic wave trains Whitham-Ostrovsky equation for $\sigma=0.1$. ``S" and ``U" denote stable and unstable regions. 
    As in Figure \ref{fig:lem4.3}, the solid orange curve represent roots of $c_p(k)-c_p(2k)$ and the solid blue curve represent roots of $\dfrac{dc_g}{dk}$.  
    (Color available online.)  Note in this case we have $T_c\approx0.132$.  }
    \label{fig:2}
\end{figure}

\

\noindent
\underline{\bf Case $\sigma<0$:}\\

For each $\sigma<0$, there exists $T=T_c(\sigma)$ such that for $0<T<T_c(\sigma)$, $c_p(k,T)-c_p(2k,T)$ 
changes its sign at two wavenumbers resulting in two critical wavenumbers, and for $T>T_c(\sigma)$, $c_p(k,T)-c_p(2k,T)$ does not change its sign (see Figure \ref{fig:np2}). On the other hand, for each $T>0$, $c_g$ attains extremum only once (see Figure \ref{f:cg}) producing one critical wavenumber. Therefore, for $0<T<T_c(\sigma)$, there exist three critical wavenumbers such that sufficiently small $2\pi/k$-periodic traveling wave is modulationally
stable for $k\in(0,k_{c_1})\cup (k_{c_2},k_{c_3})$ and modulationally
unstable for $k\in(k_{c_1},k_{c_2})\cup (k_{c_3},\infty)$. For $T>T_c(\sigma)$, there is only one critical wave number $k_c$ such that sufficiently small
$2\pi/k$-periodic traveling wave is modulationally
stable for $0<k<k_c(T)$ and modulationally
unstable for $k > k_c (T )$. The graph of $T_c$ vs. $\sigma$ is shown in Figure~\ref{fig:lem4.9}. 
\begin{figure}[ht]
    \centering
    
    \subfloat[\centering $T=0.02$]{{\includegraphics[width=7cm]{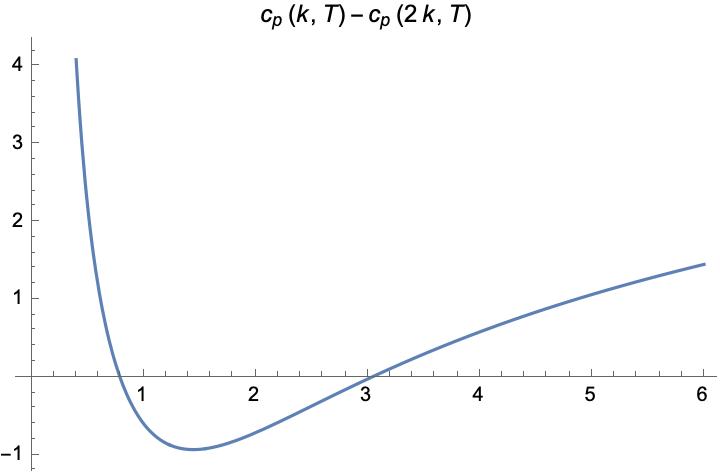} }}
    \qquad
    \subfloat[\centering $T=0.5$]{{\includegraphics[width=7cm]{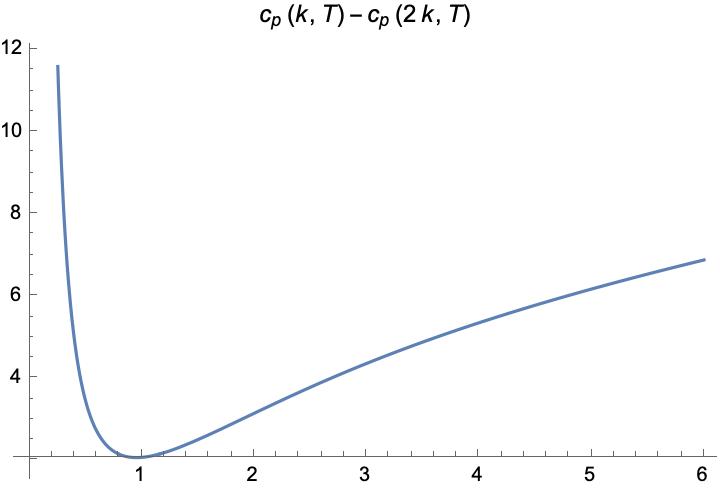} }}
    \caption{Graph of function $c_p(k,T)-c_p(2k,T)$ vs. $k$ for $\sigma=-0.1$ for which $T_c=0.141$.}\label{fig:np2}%
    \end{figure}
\begin{figure*}[ht]
    \centering
    \includegraphics[width=0.49\textwidth]{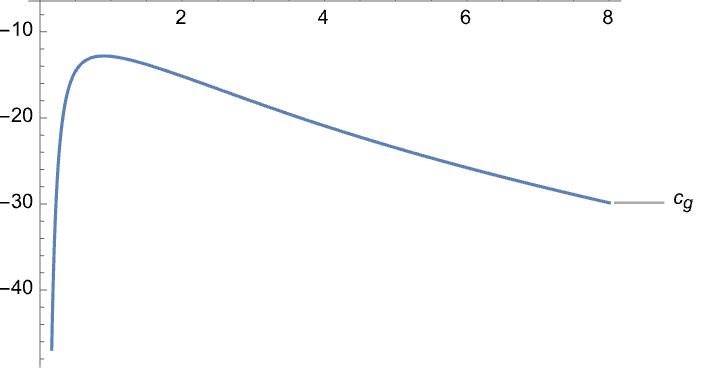}
    \caption{$c_g$ vs $k$ for $\sigma = -0.1$ and $T=0.5$.}\label{f:cg}
\end{figure*}

\begin{figure}[ht]
    \centering
    {\includegraphics[width=7cm]{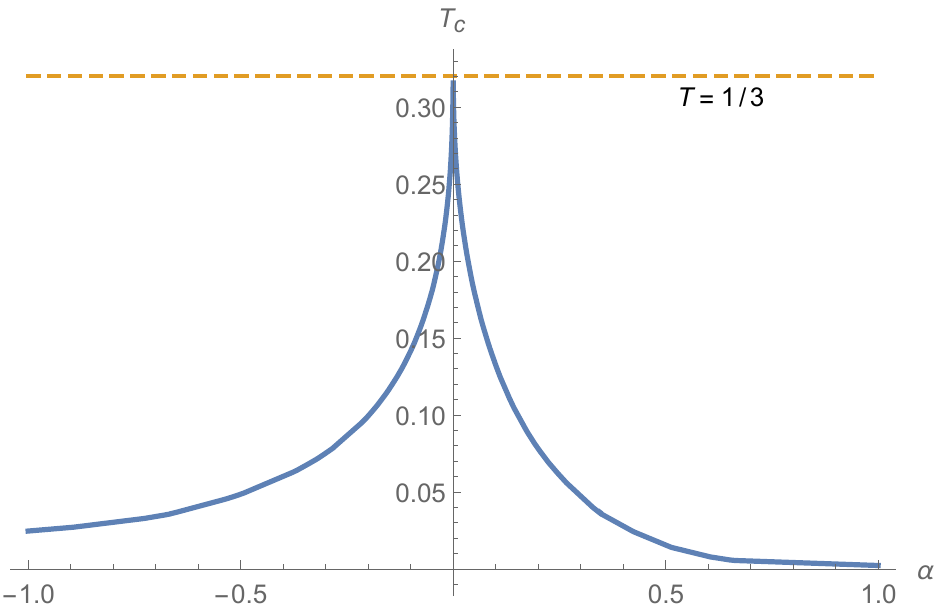} }%
    
    \caption{$T_c$ vs $\sigma$.}
    \label{fig:lem4.9}
\end{figure}

For $\sigma=-0.1$, we can see this behavior through the Figure~\ref{fig:3}. In $k$-$k\sqrt{T}$ plane, two curves are corresponding to each mechanism splitting the plane into three regions of stability and three regions of instability. For $\sigma=-0.1$, $T_c\approx0.141$ such that for $0<T<0.141$, the line crosses both the curves producing two intervals of stable wave numbers and two intervals of unstable wave numbers. On the other hand, for $T>0.141$, the line through the origin crosses only one curve producing one interval of stable wave numbers and one interval of unstable wave numbers. 
\begin{figure}[ht]
    \centering
    {\includegraphics[width=10cm]{2.jpeg} }%
    
    \caption{Stability diagram for sufficiently small, periodic wave trains of Whitham-Ostrovsky equation for $\sigma=-0.1$. ``S" and ``U" denote stable and unstable regions. Orange curve represent roots of $c_p(k,T)-c_p(2k,T)=0$ and blue curve represent roots of $\dfrac{dc_g}{dk}=0$.  }
    \label{fig:3}
\end{figure}
\begin{remark}
    For every $\sigma<0$, there is a value of $0<T=T_s<T_c(\sigma)$ corresponding to the intersection of two curves in the Figure~\ref{fig:3} for which there is only one interval of stable and unstable wavenumbers contrary to other values of $T$ in the interval $(0,T_c)$.   
\end{remark}

\bibliographystyle{abbrv}
\bibliography{ost.bib}

\begin{thebibliography}{10}

\bibitem{Bhavna2022High-FrequencyEquation}
{Bhavna}, A.~Kumar, and A.~K. Pandey.
\newblock {High-Frequency Instabilities of the Ostrovsky Equation}.
\newblock {\em Water Waves}, pages 91--108, 2022.

\bibitem{BKN}
H.~Borluk, H.~Kalisch, and D.~P. Nicholls.
\newblock A numerical study of the {W}hitham equation as a model for steady
  surface water waves.
\newblock {\em J. Comput. Appl. Math.}, 296:293--302, 2016.

\bibitem{Bottman2009KdVStable}
N.~Bottman and B.~Deconinck.
\newblock {KdV cnoidal waves are spectrally stable}.
\newblock {\em Discrete and Continuous Dynamical Systems}, 25(4):1163--1180,
  2009.

\bibitem{Bronski2016ModulationalType}
J.~C. Bronski, V.~M. Hur, and M.~A. Johnson.
\newblock {Modulational instability in equations of KdV type}.
\newblock {\em Lecture Notes in Physics}, 908, 2016.

\bibitem{Bronski2010TheEquation}
J.~C. Bronski and M.~A. Johnson.
\newblock {The modulational instability for a generalized korteweg-de vries
  equation}.
\newblock {\em Archive for Rational Mechanics and Analysis}, 197(2):357--400,
  2010.

\bibitem{BJK_11}
J.~C. Bronski, M.~A. Johnson, and T.~Kapitula.
\newblock An index theorem for the stability of periodic travelling waves of
  {K}orteweg-de {V}ries type.
\newblock {\em Proc. Roy. Soc. Edinburgh Sect. A}, 141(6):1141--1173, 2011.

\bibitem{Carter_2018}
J.~D. Carter.
\newblock Bidirectional {W}hitham equations as models of waves on shallow
  water.
\newblock {\em Wave Motion}, 82:51--61, 2018.

\bibitem{Carter24}
J.~D. Carter.
\newblock Instability of near-extreme solutions to the {W}hitham equation.
\newblock {\em Stud. Appl. Math.}, 152(3):903--915, 2024.

\bibitem{CJ_num1}
K.~M. Claassen and M.~A. Johnson.
\newblock Numerical bifurcation and spectral stability of wavetrains in
  bidirectional {W}hitham models.
\newblock {\em Stud. Appl. Math.}, 141(2):205--246, 2018.

\bibitem{DR}
V.~D. Djordjevic and L.~G. Redekopp.
\newblock On two-dimensional packets of capillary-gravity waves.
\newblock {\em Journal of Fluid Mechanics}, 79(4):703–714, 1977.

\bibitem{EJC}
M.~Ehrnstr\"{o}m, M.~A. Johnson, and K.~M. Claassen.
\newblock Existence of a highest wave in a fully dispersive two-way shallow
  water model.
\newblock {\em Arch. Ration. Mech. Anal.}, 231(3):1635--1673, 2019.

\bibitem{EJMR}
M.~Ehrnstr\"{o}m, M.~A. Johnson, O.~I.~H. Maehlen, and F.~Remonato.
\newblock On the bifurcation diagram of the capillary-gravity {W}hitham
  equation.
\newblock {\em Water Waves}, 1(2):275--313, 2019.

\bibitem{EK_local}
M.~Ehrnstr\"{o}m and H.~Kalisch.
\newblock Traveling waves for the {W}hitham equation.
\newblock {\em Differential Integral Equations}, 22(11-12):1193--1210, 2009.

\bibitem{EK_global}
M.~Ehrnstr\"{o}m and H.~Kalisch.
\newblock Global bifurcation for the {W}hitham equation.
\newblock {\em Math. Model. Nat. Phenom.}, 8(5):13--30, 2013.

\bibitem{EW_WhithamConj}
M.~Ehrnstr\"{o}m and E.~Wahl\'{e}n.
\newblock On {W}hitham's conjecture of a highest cusped wave for a nonlocal
  dispersive equation.
\newblock {\em Ann. Inst. H. Poincar\'{e} C Anal. Non Lin\'{e}aire},
  36(6):1603--1637, 2019.

\bibitem{Galkin1991OnFluid}
V.~N. Galkin and Y.~A. Stepanyants.
\newblock {On the existence of stationary solitary waves in a rotating fluid}.
\newblock {\em Journal of Applied Mathematics and Mechanics}, 55(6):939--943,
  1991.

\bibitem{Geyer2017SpectralEquation}
A.~Geyer and D.~E. Pelinovsky.
\newblock {Spectral stability of periodic waves in the generalized reduced
  Ostrovsky equation}.
\newblock {\em Letters in Mathematical Physics}, 107(7):1293–1314, 2017.

\bibitem{Gilman1995ApproximateEquation}
O.~A. Gilman, R.~Grimshaw, and Y.~A. Stepanyants.
\newblock {Approximate Analytical and Numerical Solutions of the Stationary
  Ostrovsky Equation}.
\newblock {\em Studies in Applied Mathematics}, 95(1):115--126, 1995.

\bibitem{Grimshaw2008Long-timeEquation}
R.~Grimshaw and K.~Helfrich.
\newblock {Long-time solutions of the Ostrovsky equation}.
\newblock {\em Studies in Applied Mathematics}, 121(1):71--88, 2008.

\bibitem{GSA16}
R.~Grimshaw, Y.~Stepanyants, and A.~Alias.
\newblock Formation of wave packets in the {O}strovsky equation for both normal
  and anomalous dispersion.
\newblock {\em Proc. A.}, 472(2185):20150416, 20, 2016.

\bibitem{Hakkaev2017PeriodicStability}
S.~Hakkaev, M.~Stanislavova, and A.~Stefanov.
\newblock {Periodic traveling waves of the regularized short pulse and
  Ostrovsky equations: Existence and stability}.
\newblock {\em SIAM Journal on Mathematical Analysis}, 49:1, 2017.

\bibitem{Haragus2008STABILITYEQUATION}
M.~Haragus.
\newblock {Stability of periodic waves for the generalized BBM equation}.
\newblock {\em Rev. Roumaine Maths. Pures Appl.}, 53:445--463, 2008.

\bibitem{Haragus2011TransverseEquation}
M.~Haragus.
\newblock {Transverse Spectral Stability of Small Periodic Traveling Waves for
  the KP Equation}.
\newblock {\em Studies in Applied Mathematics}, 126(2):157--185, 2011.

\bibitem{HK08}
M.~Haragus and T.~Kapitula.
\newblock {On the spectra of periodic waves for infinite-dimensional
  Hamiltonian systems}.
\newblock {\em Physica D: Nonlinear Phenomena}, 237(20), 2008.

\bibitem{Holloway1999AZone}
P.~E. Holloway, E.~Pelinovsky, and T.~Talipova.
\newblock {A generalized Korteweg-de Vries model of internal tide
  transformation in the coastal zone}.
\newblock {\em Journal of Geophysical Research: Oceans}, 104(C8):18333--18350,
  1999.

\bibitem{Hur_breaking}
V.~M. Hur.
\newblock Wave breaking in the {W}hitham equation.
\newblock {\em Adv. Math.}, 317:410--437, 2017.

\bibitem{Hur2015ModulationalWaves}
V.~M. Hur and M.~A. Johnson.
\newblock {Modulational Instability in the Whitham Equation for Water Waves}.
\newblock {\em Studies in Applied Mathematics}, 134(1):120--143, 2015.

\bibitem{HJ_SurfaceTension}
V.~M. Hur and M.~A. Johnson.
\newblock Modulational instability in the {W}hitham equation with surface
  tension and vorticity.
\newblock {\em Nonlinear Anal.}, 129:104--118, 2015.

\bibitem{JP16}
E.~R. Johnson and D.~E. Pelinovsky.
\newblock Orbital stability of periodic waves in the class of reduced
  {O}strovsky equations.
\newblock {\em J. Differential Equations}, 261(6):3268--3304, 2016.

\bibitem{JP20}
M.~Johnson and W.~R. Perkins.
\newblock {Modulational Instability of Viscous Fluid Conduit Periodic Waves}.
\newblock {\em SIAM Journal on Mathematical Analysis}, (52):277--305, 2020.

\bibitem{JZ10}
M.~Johnson and K.~Zumbrun.
\newblock {Rigorous Justification of the Whitham Modulation Equations for the
  Generalized Korteweg-de Vries Equation}.
\newblock {\em Studies in Applied Mathematics}, (125):69--89, 2010.

\bibitem{Johnson2013StabilityEquations}
M.~A. Johnson.
\newblock {Stability of small periodic waves in fractional kdv-type equations}.
\newblock {\em SIAM Journal on Mathematical Analysis}, 45:5, 2013.

\bibitem{JTW}
M.~A. Johnson, T.~Truong, and M.~H. Wheeler.
\newblock Solitary waves in a {W}hitham equation with small surface tension.
\newblock {\em Stud. Appl. Math.}, 148(2):773--812, 2022.

\bibitem{JW}
M.~A. Johnson and J.~D. Wright.
\newblock Generalized solitary waves in the gravity-capillary {W}hitham
  equation.
\newblock {\em Stud. Appl. Math.}, 144(1):102--130, 2020.

\bibitem{Kaw}
T.~Kawahara.
\newblock Nonlinear self-modulation of capillary-gravity waves on liquid layer.
\newblock {\em Journal of the Physical Society of Japan}, 38(1):265--270, 1975.

\bibitem{Lannes2013TheAsymptotics}
D.~Lannes.
\newblock {The Water Waves Problem Mathematical Analysis and Asymptotics}.
\newblock Technical report, American Mathematical Society, 2013.

\bibitem{Levandosky2007StabilityEquation}
S.~Levandosky and Y.~Liu.
\newblock {Stability and weak rotation limit of solitary waves of the ostrovsky
  equation}.
\newblock {\em Discrete and Continuous Dynamical Systems - Series B},
  7(4):793--806, 2007.

\bibitem{Liu2004StabilityEquation}
Y.~Liu and V.~Varlamov.
\newblock {Stability of solitary waves and weak rotation limit for the
  Ostrovsky equation}.
\newblock {\em Journal of Differential Equations}, 203(1):159--183, 2004.

\bibitem{Lu2012OrbitalEquation}
D.~Lu, L.~Liu, and L.~Wu.
\newblock {Orbital stability of solitary waves for generalized Ostrovsky
  equation}.
\newblock In {\em Lecture Notes in Electrical Engineering}, volume 136 LNEE,
  2012.

\bibitem{MKD}
D.~Moldabayev, H.~Kalisch, and D.~Dutykh.
\newblock The {W}hitham equation as a model for surface water waves.
\newblock {\em Phys. D}, 309:99--107, 2015.

\bibitem{Nikitenkova2015ModulationalDispersions}
S.~Nikitenkova, N.~Singh, and Y.~Stepanyants.
\newblock {Modulational stability of weakly nonlinear wave-trains in media with
  small- and large-scale dispersions}.
\newblock {\em Chaos: An Interdisciplinary Journal of Nonlinear Science},
  25:12, 2015.

\bibitem{Ostrovsky1978NonlinearOcean}
L.~A. Ostrovsky.
\newblock {Nonlinear internal waves in a rotating ocean}.
\newblock {\em Oceanology}, 18:2, 1978.

\bibitem{PS20}
I.~Posukhovskyi and A.~Stefanov.
\newblock On the ground states of the {O}strovskyi equation and their
  stability.
\newblock {\em Stud. Appl. Math.}, 144(4):548--575, 2020.

\bibitem{SKC_WhithamNumerics}
N.~Sanford, K.~Kodama, J.~D. Carter, and H.~Kalisch.
\newblock Stability of traveling wave solutions to the {W}hitham equation.
\newblock {\em Phys. Lett. A}, 378(30-31):2100--2107, 2014.

\bibitem{Stepanyants2019NonlinearWaves}
Y.~A. Stepanyants.
\newblock {Nonlinear Waves in a Rotating Ocean (The Ostrovsky Equation and Its
  Generalizations and Applications)}.
\newblock {\em Atmospheric and Oceanic Physics}, 56(1):20--42, 2020.

\bibitem{Vakhnenko1999High-frequencyMedium}
V.~O. Vakhnenko.
\newblock {High-frequency soliton-like waves in a relaxing medium}.
\newblock {\em Journal of Mathematical Physics}, 40(4):2011--2020, 1999.

\bibitem{Whitfield2014Rotation-inducedWaves}
A.~J. Whitfield and E.~R. Johnson.
\newblock {Rotation-induced nonlinear wavepackets in internal waves}.
\newblock {\em Physics of Fluids}, 26:5, 2014.

\bibitem{Whitfield2015Wave-packetEquation}
A.~J. Whitfield and E.~R. Johnson.
\newblock {Wave-packet formation at the zero-dispersion point in the
  Gardner-Ostrovsky equation}.
\newblock {\em Physical Review E - Statistical, Nonlinear, and Soft Matter
  Physics}, 91:5, 2015.

\bibitem{W74}
G.~B. Whitham.
\newblock {\em Linear and nonlinear waves}.
\newblock Wiley-Interscience [John Wiley \& Sons], New York-London-Sydney,
  1974.
\newblock Pure and Applied Mathematics.

\end{thebibliography}

\end{document}